\documentclass{IEEEtran}
\usepackage{cite}
\usepackage{amsmath,amssymb,amsfonts}
\usepackage{algorithmic,color}
\usepackage{epstopdf}
\usepackage{graphicx}
\usepackage{textcomp}
\usepackage{mathrsfs}
\usepackage{booktabs}
\def\BibTeX{{\rm B\kern-.05em{\sc i\kern-.025em b}\kern-.08em
    T\kern-.1667em\lower.7ex\hbox{E}\kern-.125emX}}

\newtheorem{theorem}{Theorem}

\newtheorem{remark}{Remark}
\newtheorem{lemma}{Lemma}

\usepackage{tikz}
\usetikzlibrary{shapes,arrows,positioning}
\usepackage{algorithm}
\usepackage{algorithmic}
\allowdisplaybreaks
\begin{document}

\title{Distributed Load Frequency Control of Multi-Area Smart Grid}

\author{Wenjing Yang, Zhaorong Zhang, Xun Li, Juanjuan~Xu
\thanks{This work was supported by the National Natural Science Foundation of China under Grants 62573262, 62503289 and the Natural Science Foundation of Shandong Province under Grants ZR2021JQ24. \textit{(Corresponding author: Juanjuan~Xu.)}}
\thanks{Wenjing Yang and Juanjuan~Xu are with School of Control Science and Engineering, Shandong University, Jinan, Shandong, P.R. China 250061.
        {\tt\small yangwenjing1024@163.com, juanjuanxu@sdu.edu.cn}}%
\thanks{Zhaorong Zhang is with School of Computer Science and Technology, Shandong University, Qingdao, Shandong, P.R. China 266237.
        {\tt\small zhangzr@sdu.edu.cn}}%
\thanks{Xun Li is with the Department of Applied Mathematics, Hong Kong Polytechnic University, Hong Kong, China.
        {\tt\small li.xun@polyu.edu.hk}}%
}

\maketitle

\begin{abstract}
In this paper, we investigate the distributed load frequency control problem in a multi-area smart grid under external load disturbances and measurement noise. The novelty lies in that the information privacy is fully taken into account, that is, the internal structural parameters and operational states of each area are not shared with non-neighboring areas, which makes traditional distributed optimal control methods ineffective. The main contribution is to propose a distributed algorithm for the global optimal power regulation command under information privacy constraints via distributed approximation of the control Riccati equation, the estimation Riccati equation, and the state estimation. Simulation results show that the proposed algorithm can approximate the performance of centralized optimal control, and the performance index under the proposed distributed controller is smaller than that under the commonly used distributed control.
\end{abstract}

\begin{IEEEkeywords}
Load frequency control, Multi-area smart grid, Distributed algorithm.
\end{IEEEkeywords}

%
\IEEEpeerreviewmaketitle

\section{Introduction}
With the rapid advancement of smart components and network communication technologies, the multi-area smart grid has attracted extensive attention due to its interconnected and intelligent characteristics \cite{1}. Such systems consist of multiple interconnected control areas that utilize communication networks to enable real-time information exchange across areas and rely on physical tie-lines to facilitate power exchange between areas. As this cyber-physical integration enhances the flexibility of energy dispatch and improves power supply reliability, it also introduces increased complexity in maintaining full-system operational stability. In particular, dynamic disturbances originating in one area may propagate to others through both physical and cyber pathways, potentially compromising the frequency stability and power balance of the overall grid \cite{2}.

Under such circumstances, as a key means of ensuring frequency stability and power balance in power systems, load frequency control (LFC) plays a central role in multi-area power systems \cite{3}. Traditionally, frequency stability in each area has been achieved by a centralized decision maker \cite{4}, i.e., so-called centralized control. However, the centralized control method requires a central processor to collect all the system's information in real time, which presents issues such as high communication demands and poor reliability. Consequently, distributed control methods have been extensively studied\cite{5,6}, where different areas make decisions using their own and neighbouring areas' information to achieve coordinated area control. For example, the distributed MPC schemes were investigated in \cite{7,8} to achieve distributed intelligent regulation of both system voltage and frequency. \cite{9} studied an LFC intelligent control for communication topology changes based on multi-agent system technology. \cite{10} studied a distributed control strategy based on an event-triggered mechanism to conserve communication resources. A distributed consensus algorithm based on an observer was discussed in \cite{11} to achieve actual frequency synchronization and power-sharing under unknown time-varying power demand. \cite{12} investigated a distributed linear quadratic multi-area LFC control method, which proves that globally optimal distributed controllers depend on the structure of the penalty matrix. \cite{13} investigated a distributed optimal frequency control algorithm for generators and controllable loads in a power transmission network. A distributed robust adaptive control algorithm was investigated in \cite{14} to address the LFC problem in power systems with dynamic net loads. \cite{15} investigated a distributed LQR control method for large-scale multi-area power systems with complete communication topology, which can approximate a centralized optimal controller.

Furthermore, due to external load disturbances, measurement noise, and uncertainties in system dynamics, the state variables in LFC systems are often not directly accessible. Therefore, designing a state observer based on measurable signals to achieve accurate estimation of the system state has become necessary. For instance, \cite{16} addressed a distributed robust secondary voltage and frequency control based on an extended state observer for autonomous micro-grids with inverter-based distributed generators. For multi-area power systems under cyber-physical attacks, \cite{17} presented an event-triggered LFC scheme based on distributed observers. An optimal state feedback control strategy is employed in \cite{18} on the basis of state estimation to perform load-frequency control for each region separately. \cite{19} studied two attack strategies against distributed state estimation in smart grids and gave the corresponding attack methods. An observer-based finite-time fuzzy LFC strategy was proposed in \cite{20} for multi-area nonlinear power systems with cyber attacks and input delays. \cite{21} proposed a decentralized LFC method relying on dynamic state estimation. \cite{22} investigated a robust LFC scheme that integrates second-order sliding mode control and an extended disturbance observer for multi-area power systems. It is worth noting that despite plenty of important advances in distributed control methods for LFC, most of the existing results are suboptimal unless the topological graph is complete. Such idealized topologies are difficult to achieve in practical power systems due to geographical limitations and cost-effectiveness. Therefore, there is a need for designing distributed optimal power regulation commands for each control area that can approximate the optimal performance of a centralized controller by using its own and neighboring areas' information.

This paper investigates the distributed LFC problem in a multi-area smart grid with external load disturbances and measurement noise. In contrast to traditional distributed optimal control methods \cite{12}, \cite{15} that require the sharing of internal structural parameters, including generator parameters, governor parameters, and tie-line coefficients of all areas, the information of each area in this paper is private and can only be shared with neighboring areas. The main contribution is designing a distributed algorithm that can achieve global optimality under information privacy constraints. The proposed algorithm consists of the distributed solving of the control Riccati equation, the estimation Riccati equation, and the state estimation. Simulation results demonstrate that the control performance of the developed method can approximate the centralized optimal control performance. In particular, the resulting performance index under the developed distributed optimal power regulation commands is smaller than that obtained by common distributed control method.

This paper is structured as follows. Section II presents the LFC model of the multi-area smart grid. Section III introduces the communication topology and the centralized optimal power regulation strategy, which serves as a performance benchmark. Section IV develops the distributed iterative algorithm that achieves global optimal control under information privacy constraints. Section V provides numerical simulations to validate the effectiveness of the proposed method. Finally, Section VI concludes the paper.

\textbf{Notation:} $\mathbb{R}^n$ represents the set of all $n$-dimensional real vectors; $\mathbb{R}^{n\times n}$ stands for the set of $n\times n$-dimensional real matrices; $P > 0$ ($P \geq 0$, respectively) means that
$P$ is a positive definite (positive semi-definite, respectively) matrix. $A'$ denotes the transpose operation of a vector or matrix $A$; and $I$ is the identity matrix with appropriately compatible dimensions.

\section{LFC model of multi-area smart grid}

This paper studies the distributed LFC problem of a multi-area smart grid consisting of $N$ areas. As shown in Fig.~\ref{fig1}, each area exchanges power with its neighboring areas $j \in \mathcal{N}_i^p$ through tie-lines and shares frequency information with neighbouring areas $j \in \mathcal{N}_i^c$ via a communication network. The control structure of the $i$-th area is depicted in Fig.~\ref{fig2}. According to \cite{5}, the dynamics of the LFC system can be described by the following differential equations:
\begin{align}
&\frac{d}{dt} \Delta f_i = \frac{1}{J_i} ( \Delta P_{m,i} - G_i \Delta f_i - \Delta P_{tie,i} - \Delta P_{d,i} ), \label{pr1} \\
&\frac{d}{dt} \Delta P_{m,i} = \frac{1}{T_{t,i}} ( \Delta P_{v,i} - \Delta P_{m,i} ), \label{pr2} \\
&\frac{d}{dt} \Delta P_{v,i} = \frac{1}{T_{g,i}}(\Delta P_{r,i} -\frac{1}{W_i} \Delta f_i - \Delta P_{v,i} ), \label{pr3} \\
&\frac{d}{dt} \Delta P_{tie,i} = \sum_{j \in \mathcal{N}_i^{p}}2 \pi  T_{ij} ( \Delta f_i - \Delta f_j ),\label{pr4}\\
&ACE_i = \beta_i \Delta f_i + \Delta P_{tie,i}, \label{pr5}
\end{align}
where $i$ is the control area, $\Delta f_i$ is the frequency deviation, $\Delta P_{m,i}$ is the mechanical power deviation of the generator, $\Delta P_{tie,i}$ is the tie line power deviation, $\Delta P_{d,i}$ is the external load disturbance, $\Delta {P}_{v,i}$ is the governor valve position deviation, $\Delta P_{r,i}$ is the power regulation command, which is the control input variable to be designed, $\Delta f_j$ is the frequency deviation of the interconnected area $j$. $T_{t,i}, T_{g,i}$ are the turbine and governor time constants, respectively. $G_i$ is the generator damping coefficient. $J_i$ is the generator rotational inertia. $W_{i}$ is the governor coefficient. $ACE_i$ is the area control error (ACE), and $\beta_{i}$ is the frequency deviation factor, $T_{ij}$ is the interconnection line coefficients between $i$ and $j$.
\begin{figure}
\centering
  \includegraphics[width=6.8cm,height=4.2cm]{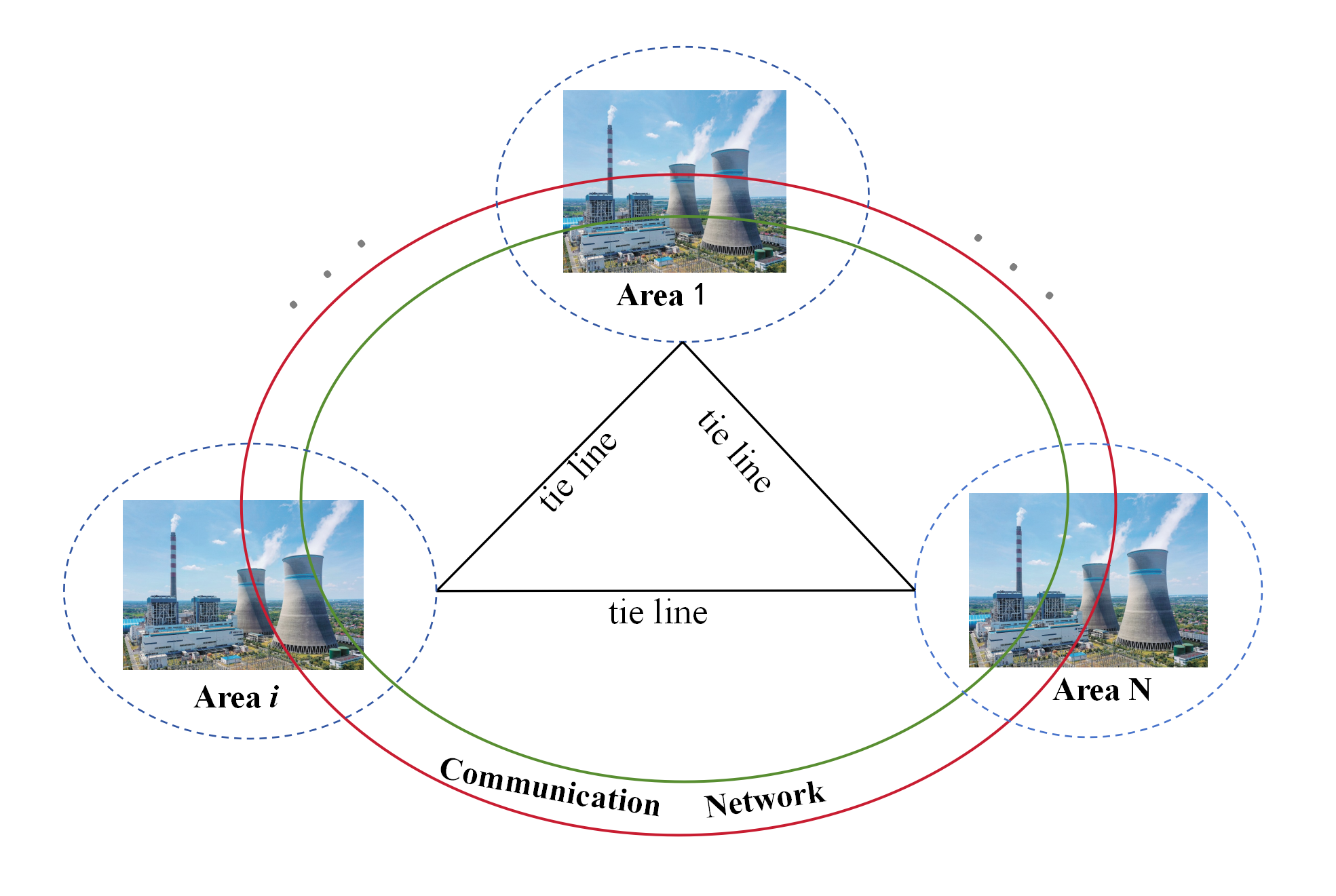}\\
  \caption{Multi-area smart grid structure.}\label{fig1}
\end{figure}
\begin{figure}
\centering
  \includegraphics[width=8.3cm,height=4.4cm]{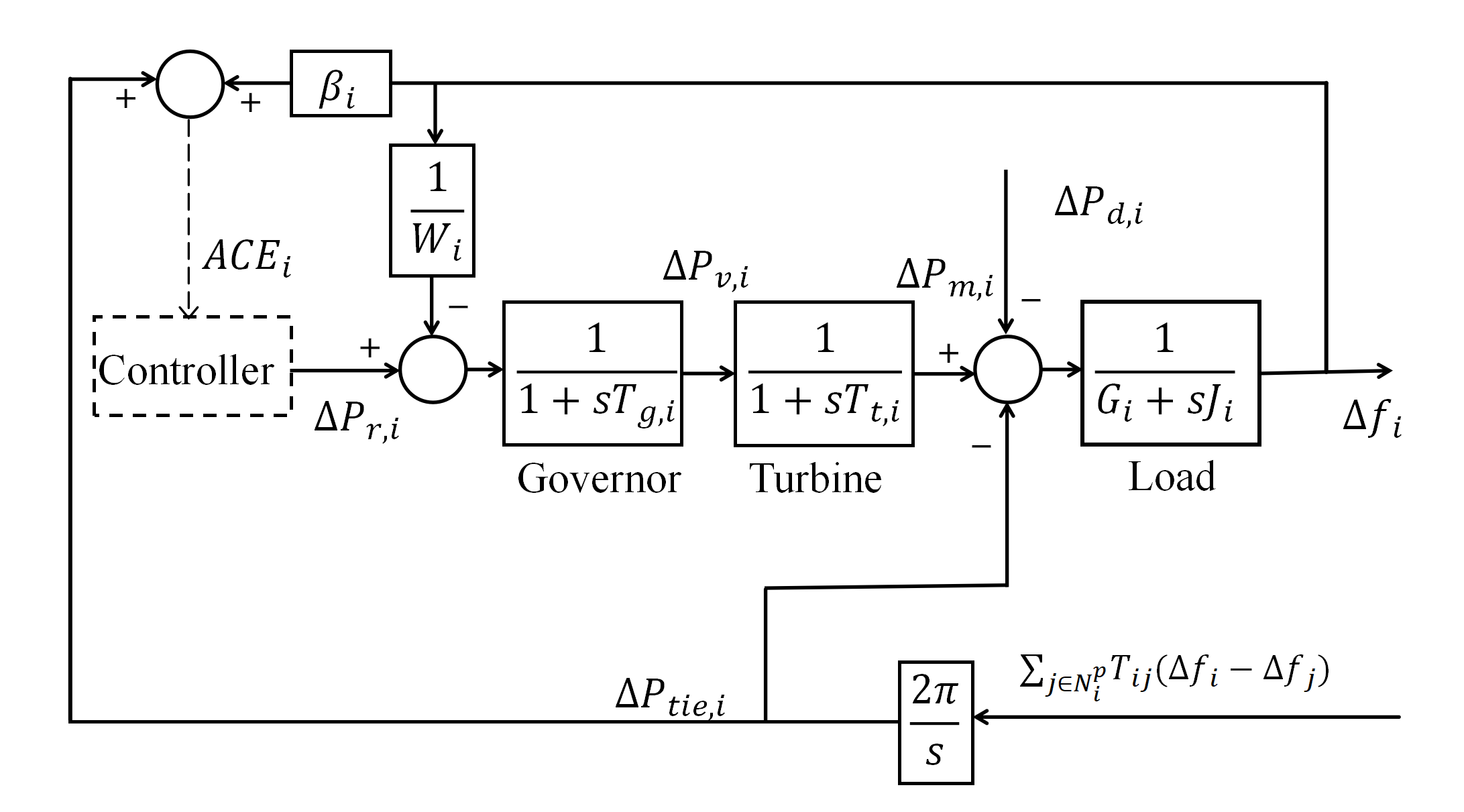}\\
  \caption{The structure of $i$th area.}\label{fig2}
\end{figure}

It is noted that the aforementioned dynamic model (\ref{pr1})-(\ref{pr5}) mathematically corresponds to a deterministic system, which fails to account for the effects of modeling uncertainties, external load fluctuations, and frequency measurement noise on each control area in practical grid operation. To enhance the control performance and practical applicability of the system, noise terms are introduced to characterize the aforementioned uncertain factors in this paper. In detail, by defining
\begin{align}
x_i(t) &= [\Delta f_i~\Delta P_{m,i}~\Delta P_{v,i}~\Delta P_{tie,i}]', \label{pr20}\\
u_i(t) &= \Delta P_{r,i}, ~~y_i(t) = ACE_i,\label{pr21}
\end{align}
the stochastic LFC model is formulated as:
\begin{align}
 \hspace{-1em}\dot{x}_i(t) &= A_{ii}x_{i}(t) + \sum_{j \in \mathcal{N}_{i}^{p}} A_{ij}x_{j}(t) + B_{ii}u_{i}(t) + w_i(t), \label{pr6}\\
 \hspace{-1em}y_i(t) &= C_{ii}x_{i}(t) + v_i(t), \label{pr7}
\end{align}
where
\begin{align}
&A_{ii}=
\begin{bmatrix}
-\dfrac{G_i}{J_i} & \dfrac{1}{J_i} & 0 & -\dfrac{1}{J_i} \\
0 & -\dfrac{1}{T_{t,i}} & \dfrac{1}{T_{t,i}} & 0 \\
-\dfrac{1}{W_iT_{g,i}} & 0 & -\dfrac{1}{T_{g,i}} & 0 \\
2\pi \sum_{j \in \mathcal{N}_i^{p}} T_{ij} & 0 & 0 & 0
\end{bmatrix}, \label{pr8}\\
&A_{ij}=
\begin{bmatrix}
0 & 0 & 0 & 0 \\
0 & 0 & 0 & 0 \\
0 & 0 & 0 & 0 \\
-2\pi T_{ij} & 0 & 0 & 0
\end{bmatrix}, B_{ii}=
\begin{bmatrix}
0 \\
0 \\
\dfrac{1}{T_{g,i}} \\
0
\end{bmatrix},\label{pr22}\\
&C_{ii} =
\begin{bmatrix}
\beta_i & 0 & 0 & 1
\end{bmatrix}.\label{pr23}
\end{align}
The term $w_i(t)$ represents the external load disturbance and modeling uncertainty, and $v_i(t)$ represents the measurement noise in frequency and tie-line power signals. Specifically, both $w_i(t)$ and $v_i(t)$ follow a Gaussian distribution with means $E_{wi}$, $E_{vi}$ and covariances $R_{wi}$, $R_{vi}$ respectively. The initial state $x_i(0)=x_{i0}$ is also a Gaussian random variable, with mean $\hat{x}_{i0}$ and covariance $R_{xi}$, and is independent of $w_i(t)$ and $v_i(t)$ over the interval $t\in [0, T]$.

To achieve frequency balance and frequency deviation, reduce control energy consumption and frequency deviation, we introduce the following linear quadratic performance index:
\begin{align}
J=E\{\int_{0}^T[x'(t)Qx(t)+u'(t)Ru(t)]dt+x'(T)Sx(T)\},\label{dc3}
\end{align}
where $x(t) = [x_1(t)~\cdots~x_N(t)]' \in \mathbb{R}^{4N}$, $u(t) = [u_1(t)~\cdots~u_N(t)]' \in \mathbb{R}^{N}$, $Q = \frac{1}{N}\sum_{i=1}^N Q_i$ with $Q = Q' \succeq 0$, and $R =\text{diag}\{R_1, R_2, \cdots, R_N\} \succ 0$, with compatible dimensions. In particular, the matrices $Q_i$ and $R_i$ are private to each area $i$.

It can be seen from the system model (\ref{pr6})-(\ref{pr7}) and performance index (\ref{dc3}) that each area $i$ has its own information characteristics, including $A_{ii},~ B_{ii},~ C_{ii},~ R_{wi},~ R_{vi},~ Q_i,~ R_i,~ x_i(t),~ y_i(t)$. Considering the information security and privacy protection requirements of multi-area smart grids, such information can only be shared among communication neighbors \cite{1,10}. To this end, this paper investigates distributed optimal power regulation strategies under the information privacy constraints. Mathematically, we define the set of private information for area $i$ as:
\begin{align}
\mathcal{F}_i(t) = \mathcal{W}_i(t) \cup \left( \bigcup_{j \in \mathcal{N}_i^c} \mathcal{W}_j(t) \right), \quad i = 1,2,\dots,N,
\end{align}
where $\mathcal{W}_i(t) = \{A_{ii}, ~\sum_{j \in \mathcal{N}_i^{p}} A_{ij},~ B_{ii},~ C_{ii},~ R_{wi}, ~R_{vi},,~Q_i,\\~R_i,~x_i(\tau),~ y_i(\tau), \tau \leq t\}$ denotes the private information of area $i$, and $\mathcal{N}_i^c$ represents the communication neighbor set of area $i$, whose rigorous definition will be given in the following Section.

Under the above information constraint, the problem investigated in this paper is formulated as follows.

\textbf{\textit{Problem}.} The objective of this paper is to design a distributed power regulation command such that the cost function defined in (\ref{dc3}) is minimized subject to (\ref{pr6})-(\ref{pr23}), where each area $i$ can only access its own and communication neighbours' internal structural parameters and operational states $\mathcal{F}_i(t)$.


\section{Preliminaries}
\subsection{Multi-Area Smart Grid Over Communication Networks}
The communication information exchange among control areas in a multi-area smart grid is modeled by an undirected communication network graph $\mathcal{G}^c= (\mathcal{N}, \mathcal{E}^c, \mathcal{A})$, where $\mathcal{N} = \{1, 2, \dots, N\}$ represents the collection of control areas, and $i$ is the $i$th area. The edge set $\mathcal{E}^c \subseteq \mathcal{N} \times \mathcal{N}$ characterizes the available communication transmission links, while $\mathcal{A} = [a_{ij}]_{N \times N}$ is the associated weighted adjacency matrix, with $a_{ij} > 0$ if $(j, i) \in \mathcal{E}^c$, and $a_{ij} = 0$ otherwise. Here, $(i, j) \in \mathcal{E}^c$ indicates that area $j$ can directly receive information from area $i$. For each area $i$, the set of its neighbors is defined as $\mathcal{N}_i^c = \{ j \mid (j, i) \in \mathcal{E}^c\}$. The Laplacian matrix of $\mathcal{G}^c$ is denoted by $\mathcal{L} = [l_{ij}]_{N \times N}$, where $l_{ii} = \sum_{j=1}^N a_{ij}$, and $l_{ij} = -a_{ij}$ for $i \neq j$. In particular, the physical tie-line connections are characterized by a separate graph $\mathcal{G}^p = (\mathcal{N}, \mathcal{E}^p, \mathcal{T})$. The edge set $\mathcal{E}^p$ characterizes the available physical tie-line transmission links.  And, $\mathcal{T} = [T_{ij}]_{N\times N}$ is the associated weighted adjacency matrix, with $T_{ij} > 0$ if a tie-line directly connects areas $i$ and $j$; otherwise $T_{ij} = 0$. The physical neighbors of area $i$ are $\mathcal{N}_i^p = \{ j \mid (j, i) \in \mathcal{E}^p\}$.
\subsection{Centralized Optimal Power Regulation Strategy}

To address the \textbf{\textit{Problem}} and provide a performance benchmark, this section first proposes a centralized optimal power control strategy. In this scenario, each area can share all areas' internal structural parameters and operational states, and the available information is mathematically represented as
\begin{align}
\mathcal{F}(t) = \bigcup_{i=1}^{N} \mathcal{W}_i(t).
\end{align}
Based on $\mathcal{F}(t)$, the global system matrices $A$, $B$ and $C$ for the overall multi-area system as follows:
\begin{align*}
A = \sum_{i=1}^N A_i, \quad B=\sum_{i=1}^N B_i, \quad C = \sum_{i=1}^N C_i,
\end{align*}
with $A_i, B_i, C_i$ being the local system matrices of area $i$, given by
\begin{align*}
A_i &= E_i' A_{ii} E_i + \sum_{j \in \mathcal{N}_i^{p}} E_i' A_{ij} E_j, \\
B_i &= E_i' B_{ii}e_i, ~
C_i = e_i' C_{ii}E_i, \\
E_i &= [0~\cdots~0~I~0~\cdots~0],\\
e_i &= [0~\cdots~0~1~0~\cdots~0].
\end{align*}
Furthermore, denote $R_x=\text{diag}\{R_{x1}, R_{x2},\ldots, R_{xN}\}$, $R_w=\text{diag}\{R_{w1}, R_{w2},\ldots, R_{wN}\}$, $R_v=\text{diag}\{R_{v1}, R_{v2}, \ldots, R_{vN}\}$. Then, the centralized optimal power regulation command is given below.

\begin{theorem}\label{the1} The centralized optimal power regulation command is given by:
\begin{align}
u^{*}(t)=-K(t)\hat{x}^{*}(t),\label{dc7}
\end{align}
where $\hat{x}^{*}(t)$ is the optimal state estimation based on all available ACE measurements across the entire smart grid, which satisfies:
\begin{align}
\dot{\hat{x}}^{*}(t) &= A\hat{x}^{*}(t)+Bu^{*}(t)+L(t)[y(t)-C\hat{x}^{*}(t)],\label{dc8}\\
\dot{\Sigma}(t) &= A\Sigma(t)+\Sigma(t)A'-L(t)C\Sigma(t)+R_{w},\label{dc5}\\
L(t) &= \Sigma(t)C'R_{v}^{-1},\label{dc6}
\end{align}
with $\Sigma(0)=R_{x}$, $\hat{x}^{*}(0)=\hat{x}(0)$, and the control gain $K(t)$ satisfies:
\begin{align}
K(t)&=R^{-1}B'P(t),\label{dc39}\\
\dot{P}(t)&=-A'P(t)-P(t)A-Q+P(t)BR^{-1}B'P(t),\label{dc4}
\end{align}
with $P(T)=S$.
\end{theorem}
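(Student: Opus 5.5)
This is the classical LQG separation theorem, applied to the stacked system. I would first assemble the global model from (\ref{pr6})--(\ref{pr7}) using the embedding matrices $E_i,e_i$. This gives $\dot x = Ax+Bu+w$ and $y=Cx+v$, with $w,v$ independent Gaussian processes with covariances $R_w,R_v$, and $x(0)\sim\mathcal{N}(\hat x(0),R_x)$ independent of both. The admissible controls are those adapted to $\mathcal{F}(t)$, i.e. to the output history $\{y(\tau),\tau\le t\}$. Throughout I treat $w,v$ as white noise (formally, increments of Wiener processes). Any nonzero means $E_{wi},E_{vi}$ can be absorbed as known deterministic offsets in the filter and in the innovation; I will assume zero means so that the statement holds exactly as written.

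\textbf{Step 1 (estimation).} For any admissible $u$, the conditional law of $x(t)$ given $\{y(\tau),\tau\le t\}$ is Gaussian. This holds because $u$ is a functional of $y$, so the error dynamics are control-independent. Its mean $\hat x^*(t)$ evolves by the Kalman--Bucy filter (\ref{dc8}) with gain (\ref{dc6}). Its covariance $\Sigma(t)$ solves the Riccati equation (\ref{dc5}) with $\Sigma(0)=R_x$. I would verify this by writing the error $e=x-\hat x^*$, which satisfies $\dot e=(A-LC)e+w-Lv$. The covariance of $e$ then obeys a Lyapunov equation, and choosing $L$ to minimize it gives (\ref{dc6}) and turns the Lyapunov equation into (\ref{dc5}). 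Finally, the innovation $y-C\hat x^*$ is white with intensity $R_v$ and is orthogonal to the past.

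\textbf{Step 2 (completion of squares).} Let $P$ solve (\ref{dc4}) with $P(T)=S$; it exists on $[0,T]$ since $Q\succeq0$, $S\succeq0$, $R\succ0$. Apply It\^o's formula to $x'Px$ along the true state:
\[
d(x'Px)=\big(x'\dot Px+2x'P(Ax+Bu)+\mathrm{tr}(PR_w)\big)dt+dM,
\]
where $M$ is a martingale term. Integrate from $0$ to $T$, take expectations, and substitute (\ref{dc4}). This gives
\[
J=E[x_0'P(0)x_0]+\int_0^T\mathrm{tr}(PR_w)dt+E\int_0^T (u+Kx)'R(u+Kx)\,dt,
\]
with $K$ as in (\ref{dc39}).

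Next split $x=\hat x^*+e$. Since $u$ and $\hat x^*$ are $y$-measurable and $E[e\mid y]=0$, the cross term vanishes. Hence
\[
E(u+Kx)'R(u+Kx)=E(u+K\hat x^*)'R(u+K\hat x^*)+\mathrm{tr}(K'RK\Sigma).
\]
The second term does not depend on $u$. So $J$ is minimized exactly by $u=-K\hat x^*$, which is (\ref{dc7}).

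The main obstacle is the rigor of Step 1 under feedback: showing that $\Sigma$ and the filter structure are unaffected by the choice of admissible $u$, and that the orthogonality $E[e\mid\mathcal{F}(t)]=0$ holds. I would handle this by the standard decomposition $x=\bar x+\tilde x$. Here $\bar x$ is driven by $u$ deterministically given $y$, while $\tilde x$ is the control-free part. This reduces the problem to the uncontrolled Kalman--Bucy theory. Alternatively, I would simply cite the classical separation principle.
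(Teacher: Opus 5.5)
Your proposal is correct and follows the same route as the paper. The paper's proof is only an appeal to the standard LQG separation theorem in Zhou, Doyle and Glover, and your argument is that standard proof written out for the finite-horizon stacked system: the Kalman--Bucy filter, completion of squares via It\^o's formula, and the cross term eliminated by $E[e\mid y]=0$. The assumptions you make explicit are precisely what the statement needs: zero-mean, white, mutually independent $w,v$, and controls adapted to the ACE measurements only, even though the paper's $\mathcal{F}(t)$ formally lists the states $x_i(\tau)$. One small correction to your side remark: a nonzero $E_{w}$ would not merely shift the filter, but would also add a deterministic feedforward term to $u^*$.
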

\emph{Proof.} The derivation is standard and follows directly from the LQG framework established in Theorem 14.7 of \cite{23}. \hfill $\blacksquare$

\section{Distributed Optimal Power Regulation Strategy}

Recalling that each area $i$ can only access the internal structural parameters and operational states $\mathcal{F}_i(t)$ of itself and its communication neighbours, this renders the centralised optimal power control command in Theorem ~\ref{the1} inapplicable, and traditional distributed optimal control methods \cite{12}, \cite{15} are also ineffective.

In this section, we propose a distributed iterative algorithm based on $\mathcal{F}_i(t)$ to approximate the centralized optimal power regulation command (\ref{dc7}). From (\ref{dc7}), it is evident that the optimal power regulation command relies on three global quantities: the solution $P(t)$ of the control Riccati equation (\ref{dc4}), the solution \(\Sigma(t)\) of the estimation Riccati equation (\ref{dc5}), and the state estimate $\hat{x}^{*}(t)$ of (\ref{dc6}). Accordingly, the distributed solution scheme is decomposed into the distributed approximation of \(P(t)\), \(\Sigma(t)\), and $\hat{x}^{*}(t)$, respectively. The overall architecture is illustrated in Fig.~\ref{fig10}, with the detailed design presented as follows.
\begin{figure}[h]
\centering
  \includegraphics[width=8.6cm,height=4.5cm]{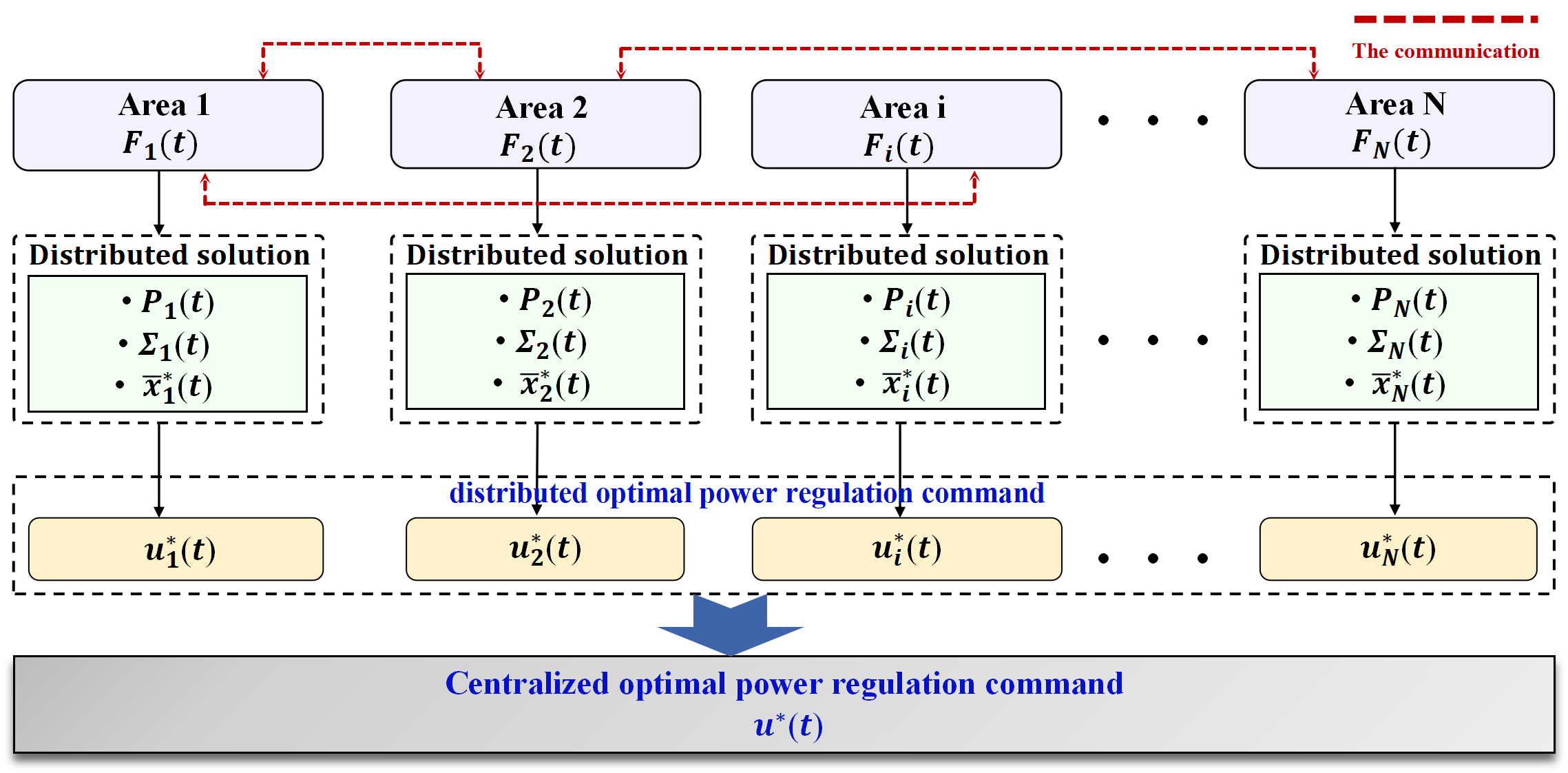}\\
  \caption{The overall distributed architecture.}\label{fig10}
\end{figure}

\begin{itemize}
\item\emph{\textbf{The distributed solution of \(P(t)\)}}
\end{itemize}

First, we present the distributed iterative scheme for \(P(t)\). It is worth noting that the control Riccati equation (\ref{dc4}) is essentially a nonlinear differential equation, which is already difficult to solve analytically. To this end, we propose an distributed iterative algorithm, enabling each control area to approximate \(P(t)\) using only its own and neighboring information, with the iterative algorithm is given as follows:
\begin{align}
X_{i,r}^{m}(t)=&X_{i,r-1}^{m}(t)+\alpha_r[\tilde{A}_{i}-\tilde{B}_{i}\tilde{R}_{i}
\tilde{B}_{i}'P_{i}^{m-1}(t)\nonumber\\
&\quad
-X_{i,r-1}^{m}(t)]
+\frac{1}{\delta}\sum_{j\in\mathcal{N}_i^c}[X_{j,r-1}^{m}(t)\nonumber\\
&\quad-X_{i,r-1}^{m}(t)],\label{dc20}\\
Y_{i,r}^{m}(t)=&Y_{i,r-1}^{m}(t)+\alpha_r[Q_{i}+P_{i}^{m-1}(t)
\tilde{B}_{i}\tilde{R}_{i}
\tilde{B}_{i}'
\nonumber\\
&\times P_{i}^{m-1}(t)-Y_{i,r-1}^{m}(t)]+\frac{1}{\delta}\nonumber\\
&\times\sum_{j\in\mathcal{N}_i^c}[Y_{j,r-1}^{m}(t) -Y_{i,r-1}^{m}(t)],\label{dc21}\\
\dot{P}_{i}^{m}(t)=&-X_{i,\infty}^{m}(t)'P_{i}^{m}(t)-P_{i}^{m}(t)
X_{i,\infty}^{m}(t)\nonumber\\
&
-Y_{i,\infty}^{m}(t),\label{dc18}
\end{align}
where \(r=1,2,\cdots\) and \(m=1,2,\cdots\) denote the inner and outer iteration indices, respectively. $\tilde{R}_{i}=\text{diag}\{0, \cdots, (1/N)R_i^{-1}, \cdots, 0\}$, $\tilde{A}_{i}=NA_{i}$, $\tilde{B}_{i}=NB_{i}$. $X_{i,\infty}^{m}(t) \triangleq \lim_{r\to\infty} X_{i,r}^{m}(t)$ and $Y_{i,\infty}^{m}(t) \triangleq \lim_{r\to\infty} Y_{i,r}^{m}(t)$ denote the limiting matrices of $X_{i,r}^{m}(t)$ and $Y_{i,r}^{m}(t)$ as the inner iteration $r$ converges, respectively. The initial conditions are
$X_{i,0}^{m}(t)=0, Y_{i,0}^{m}(t)=0$, $P_{i}^{0}(t)=0$, $P_{i}^{m}(T)=0$.

\emph{Practical implementation:} For each outer iteration $m$, each area executes the inner iterations (\ref{dc20})-(\ref{dc21}) using local and neighbor information until convergence within a prescribed accuracy $\sigma>0$, i.e., until $\|X_{i,r}^{m}(t)-X_{i,r-1}^{m}(t)\|<\sigma$ and $\|Y_{i,r}^{m}(t)-Y_{i,r-1}^{m}(t)\|<\sigma$. These converged values are then used in (\ref{dc18}) to solve for $P_{i}^{m}(t)$. The outer iteration continues until $P_{i}^{m}(t)$ converges, i.e., until $\|P_{i}^{m}(t)-P_{i}^{m-1}(t)\|<\sigma$, yielding the converged value $P_{i}^{m}(t)$ which approximates the global control Riccati matrix $P(t)$.

\begin{itemize}
\item\emph{\textbf{The distributed solution of $\Sigma(t)$}}
\end{itemize}

Secondly, by exploiting the duality between $\Sigma(t)$ and $P(t)$, a distributed iterative algorithm for the estimation Riccati $\Sigma_{i}(t)$ for area $i$ can be given in a similar manner:
\begin{align}
W_{i,r}^{m}(t)&=W_{i,r-1}^{m}(t)+\alpha_r[\tilde{A}_{i}-\Sigma_{i}^{m-1}(t)\tilde{C}_{i}'\tilde{R}_{vi}
\tilde{C}_{i}\nonumber\\
&\quad-W_{i,r-1}^{m}(t)]+\frac{1}{\delta}\sum_{j\in\mathcal{N}_i^c}[W_{j,r-1}^{m}(t)\nonumber\\
&\quad
-W_{i,r-1}^{m}(t)],\label{dc22}\\
G_{i,r}^{m}(t)&=G_{i,r-1}^{m}(t)+\alpha_r[\tilde{R}_{wi}+\Sigma_{i}^{m-1}(t)
\tilde{C}_{i}'\tilde{R}_{vi}\nonumber\\
&\quad \times
\tilde{C}_{i}\Sigma_{i}^{m-1}(t) -G_{i,r-1}^{m}(t)]\nonumber\\
&\quad+\frac{1}{\delta}\sum_{j\in\mathcal{N}_i^c}[G_{j,r-1}^{m}(t)-G_{i,r-1}^{m}(t)],\label{dc23}\\
\dot{\Sigma}_{i}^{m}(t)&=W_{i,\infty}^{m}(t)\Sigma_{i}^{m}(t)+\Sigma_{i}^{m}(t)
W_{i,\infty}^{m}(t)'\nonumber\\
&\quad+G_{i,\infty}^{m}(t),\label{dc19}
\end{align}
where $\tilde{C}_{i}=NC_{i}$, $\tilde{R}_{vi}=\text{diag}\{0, \cdots, (1/N)R_{vi}^{-1}, \cdots, 0\}$, $\tilde{R}_{wi}=\text{diag}\{0, \ldots,0,NR_{wi},0,\ldots,0\}$, $W_{i,\infty}^{m}(t) \triangleq \lim_{r\to\infty} W_{i,r}^{m}(t)$ and $G_{i,\infty}^{m}(t) \triangleq \lim_{r\to\infty} G_{i,r}^{m}(t)$ denote the limiting matrices as the inner iteration $r$ converges. The initial conditions are set as $W_{i,0}^{m}(t)=0$, $G_{i,0}^{m}(t)=0$, $\Sigma_{i}^{0}(t)=0$, $\Sigma_{i}^{m}(0)=\tilde{R}_{x}$.

\emph{Practical implementation:} For each outer iteration $m$, each area executes the inner iterations (\ref{dc22})-(\ref{dc23}) until convergence within a prescribed accuracy $\sigma>0$, i.e., until $\|W_{i,r}^{m}(t)-W_{i,r-1}^{m}(t)\|<\sigma$ and $\|G_{i,r}^{m}(t)-G_{i,r-1}^{m}(t)\|<\sigma$. These converged values are then used in (\ref{dc19}) to solve for $\Sigma_{i}^{m}(t)$. The outer iteration continues until $\Sigma_{i}^{m}(t)$ converges, i.e., until $\|\Sigma_{i}^{m}(t)-\Sigma_{i}^{m-1}(t)\|<\sigma$, yielding the converged value $\Sigma_{i}^{m}(t)$ which approximates the global estimation Riccati matrix $\Sigma(t)$.

\begin{itemize}
\item\emph{\textbf{The distributed solution of $\hat{x}^{*}(t)$}}
\end{itemize}

Finally, we compute the optimal state estimate $\hat{x}^{*}(t)$ in a distributed manner. From (\ref{dc8}), we have that
\begin{align}
\hat{x}^{*}(t)=\hat{\Omega}(t,0)x_{0}-\int_0^t\hat{\Omega}(t,\nu)L(\nu)y(\nu) d\nu,\label{dc30}
\end{align}
where $\hat{\Omega}(t,\nu)=\hat{\Omega}(t)\hat{\Omega}^{-1}(\nu)$ and $\Omega(t)$ satisfies the following differential equation:
\begin{align}
\dot{\hat{\Omega}}(t) = [A - BK(t) - L(t)C]\hat{\Omega}(t),   \label{dc31}
\end{align}
with the initial condition $\hat{\Omega}(0)=I$. It is evident that the optimal state estimation in (\ref{dc30}) relies on the associated state transition matrix $\hat{\Omega}(t, s)$. To compute this matrix in a distributed manner, each area first utilizes its own and neighboring areas' dynamic model parameters to approximate the global matrix $H(t)=\Sigma(t)C'R_{v}^{-1}C$ via the following distributed iterative algorithm:
\begin{align}
H_{i,\xi}(t) &= H_{i,\xi-1}(t) + \alpha_{\xi} \big[\Sigma_{i,\infty}(t)\tilde{C}_i'\tilde{R}_{vi}\tilde{C}_i-H_{i,\xi-1}(t) \big]\nonumber\\
&\quad + \frac{1}{\delta}\sum_{j \in \mathcal{N}_i^c} [H_{j,\xi-1}(t)- H_{i,\xi-1}(t)], \label{dc33}
\end{align}
where $\xi=1,2,\cdots$, $\Sigma_{i,\infty}(t)\triangleq \lim_{m\to\infty} \Sigma_{i}^{m}(t)$. The initial condition is $H_{i,0}(t)=0$. Subsequently, each control area $i$ can obtain the local state transition matrix $\hat{\Omega}_{i}(t)$ by solving the following differential equation:
\begin{align}
\dot{\hat{\Omega}}_{i}(t)=[X_{i,\infty}(t)
-H_{i,\infty}(t)]\hat{\Omega}_{i}(t),\label{dc35}
\end{align}
with $\hat{\Omega}_{i}(0)=I$, $X_{i,\infty}(t)\triangleq \lim_{m\to\infty} \lim_{r\to\infty} X_{i,r}^{m}(t)$ and $H_{i,\infty}(t)\triangleq \lim_{\xi\to\infty} H_{i,\xi}(t)$. Finally, each area utilizes its own and neighboring areas' ACE signals and initial state information $x_i(0)$ to approximate the global state estimate $\hat{x}^{*}(t)$ through the following distributed iterative algorithm:
\begin{align}
\bar{x}_{i,\tau}^{*}(t)& = \bar{x}_{i,\tau-1}^{*}(t)
+\alpha_\tau[\hat{\Omega}_{i}(t,0)\bar{x}_{i0}+\int_0^t
\hat{\Omega}_{i}(t,\nu)\nonumber\\
&\quad \times\Sigma_{i,\infty}(\nu)\tilde{C}_i'\tilde{R}_{vi}\bar{y}_{i}(\nu)d\nu
-\bar{x}_{i,\tau-1}^{*}(t)]\nonumber\\
&
\quad +\frac{1}{\delta}\sum_{j\in\mathcal{N}_i^c}[\bar{x}_{j,\tau-1}^{*}(t)-\bar{x}_{i,\tau-1}^{*}(t)], \label{dc36}
\end{align}
where $\tau=1,2,\cdots$, $\bar{y}_{i}(t)=[0~\cdots ~0~ Ny_{i}(t)~0~\cdots~0]'$. The initial condition is $\bar{x}^{*}_{i,0}(t)=0$, $\bar{x}_{i0}=[0,\cdots,0,~ Nx_{i0},~0,\cdots,0]'$.

\emph{Practical implementation:} Each area executes the iterations (\ref{dc33}) until convergence, i.e., until $\|H_{i,\xi}(t)-H_{i,\xi-1}(t)\|<\sigma$. These converged values are then used in (\ref{dc35}) to solve for local state transition matrix $\hat{\Omega}_{i}(t)$. Finally, each area performs (\ref{dc36}) until $E\|\bar{x}_{i,\tau}^{*}(t)-\bar{x}_{i,\tau-1}^{*}(t)\|<\sigma$ and the resulting converged value $\bar{x}_{i,\tau}^{*}(t)$ approximates the globally optimal state estimate $\hat{x}^{*}(t)$.

The following lemma verifies the convergence of the aforementioned iterative algorithms (\ref{dc20})-(\ref{dc36}), which serves as a fundamental theoretical prerequisite for the subsequent distributed optimal power regulation command design.

\begin{lemma}\label{le1}
Consider the distributed iterative algorithms (\ref{dc20})-(\ref{dc36}). Select a positive constant $\delta$ such that the matrix $I_N-\frac{1}{\delta}\mathcal{L}-\frac{1}{N}\mathbf{1}_N\mathbf{1}_N'$ is Hurwitz, and choose the step sizes $\alpha_r$ satisfying $\sum_{r=1}^\infty\alpha_r=\infty$ and $\sum_{r=1}^\infty\alpha_r^2<\infty$. Then, for all $t\in[0,T]$ and all areas $i\in\mathcal{N}$, the following convergences hold:
\begin{align*}
&\lim_{m\to\infty}\lim_{r\to\infty} X_{i,r}^{m}(t)=X(t),~ \lim_{m\to\infty}\lim_{r\to\infty} Y_{i,r}^{m}(t)=Y(t),\\
&\lim_{m\to\infty}\lim_{r\to\infty} W_{i,r}^{m}(t)=W(t),~\lim_{m\to\infty}\lim_{r\to\infty} G_{i,r}^{m}(t)=G(t),\\
&\lim_{m\to\infty} P_{i}^{m}(t)=P(t),\quad\quad\quad\lim_{m\to\infty} \Sigma_{i}^{m}(t)=\Sigma(t),\\
&\lim_{\xi\to\infty} H_{i,\xi}(t)=H(t),\quad\quad\quad
\lim_{\tau\to\infty} E\left\|\bar{x}_{i,\tau}^{*}(t)-\hat{x}^{*}(t)\right\|^2=0,
\end{align*}
where $X(t)=A-BR^{-1}B'P(t)$, $Y(t)=Q-P(t)BR^{-1}B'P(t)$, $W(t)=A-\Sigma(t)C'R_{v}^{-1}C$, $G(t)=R_{w}+\Sigma(t)C'R_{v}^{-1}C\Sigma(t)$, $H(t)=\Sigma(t)C'R_{v}^{-1}C$.
\end{lemma}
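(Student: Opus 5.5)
The proof splits into two layers. The inner layer is a distributed stochastic-approximation consensus scheme that averages local quantities. The outer layer is a Newton--Kleinman/Picard-type iteration on the two Riccati equations. I would first prove a generic consensus lemma and then invoke it for every inner recursion (\ref{dc20})--(\ref{dc23}), (\ref{dc33}), (\ref{dc36}).

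\emph{Generic consensus lemma.} Let $Z_{i,r}=Z_{i,r-1}+\alpha_r[M_i-Z_{i,r-1}]+\frac{1}{\delta}\sum_{j\in\mathcal{N}_i^c}[Z_{j,r-1}-Z_{i,r-1}]$ with $Z_{i,0}=0$. Stack the iterates as $\mathbf{Z}_r=[Z_{1,r};\dots;Z_{N,r}]$, so that $\mathbf{Z}_r=((I_N-\frac1\delta\mathcal{L}-\alpha_r I)\otimes I)\mathbf{Z}_{r-1}+\alpha_r\mathbf{M}$. I would decompose $\mathbf{Z}_r$ along $\mathbf{1}_N$ and its orthogonal complement.
\begin{itemize}
\item Since $\mathbf{1}_N'\mathcal{L}=0$ for an undirected graph, the average $\bar Z_r=\frac1N\sum_iZ_{i,r}$ obeys $\bar Z_r=(1-\alpha_r)\bar Z_{r-1}+\alpha_r\bar M$.
\item Under $\sum\alpha_r=\infty$, this gives $\bar Z_r\to\bar M=\frac1N\sum_iM_i$.
\item The disagreement component is propagated by $(I_N-\frac1\delta\mathcal{L}-\frac1N\mathbf{1}\mathbf{1}')$ restricted to $\mathbf{1}^\perp$. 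The hypothesis on $\delta$ is meant to make this map contractive (spectral radius $<1$; I read ``Hurwitz'' as Schur stability in discrete time), with an additional perturbation of size $O(\alpha_r)$.
\item A standard geometric-sum argument with $\alpha_r\to0$ (implied by $\sum\alpha_r^2<\infty$) then shows the disagreement tends to $0$.
\end{itemize}
Hence $Z_{i,\infty}=\frac1N\sum_j M_j$ for every $i$. For the stochastic iteration (\ref{dc36}) the same argument is run in mean square, using $\sum\alpha_r^2<\infty$ to control the noise variance, which yields the $E\|\cdot\|^2$ statement.

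\emph{Applying the lemma.} I then compute the averages from the scalings $\tilde A_i=NA_i$, $\tilde B_i=NB_i$ and $\tilde R_i$, using $\sum_iA_i=A$ and $\sum_i\tilde B_i\tilde R_i\tilde B_i'=\sum_i N B_iR_i^{-1}B_i'=NBR^{-1}B'$ (block-diagonal $R$). The scaling must be tracked carefully; it is designed so that, whenever all $P_i^{m-1}$ equal a common $P^{m-1}$, one gets $X^m_{\infty}=A-BR^{-1}B'P^{m-1}$ and $Y^m_\infty=Q+P^{m-1}BR^{-1}B'P^{m-1}$ at every node. By induction on $m$, starting from $P_i^0=0$ (common), the iterates stay identical across nodes. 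Equation (\ref{dc18}) is then exactly the Kleinman Lyapunov step $-\dot P^m=(A-BK^{m-1})'P^m+P^m(A-BK^{m-1})+Q+K^{m-1\prime}RK^{m-1}$.

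\emph{Outer convergence.} On the finite horizon $[0,T]$ I would show monotone convergence $P^m\downarrow P$, or alternatively a contraction in $\sup_t\|\cdot\|$. Subtracting the Riccati equation (\ref{dc4}) gives, for $\Delta^m=P^m-P$, the relation $-\dot\Delta^m=(A-BK^{m-1})'\Delta^m+\Delta^m(A-BK^{m-1})+\Delta^{m-1}BR^{-1}B'\Delta^{m-1}$. Hence $\Delta^m\ge0$ and $\Delta^{m}\le\Delta^{m-1}$ by comparison, with boundedness from $P^1$. A monotone bounded limit then satisfies (\ref{dc4}), and uniqueness of its solution gives $P^m\to P$. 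The claims for $X$ and $Y$ follow by continuity.

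\emph{Remaining quantities.} The filter Riccati equation $\Sigma$ is handled identically by duality: it is a forward equation, with $\sum_i\tilde C_i'\tilde R_{vi}\tilde C_i$ reproducing $C'R_v^{-1}C$ and $\sum\tilde R_{wi}/N=R_w$. The consensus lemma directly gives $H_{i,\infty}=H$. Then (\ref{dc35}) coincides with (\ref{dc31}), since $X-H=A-BK-LC$, so $\hat\Omega_i=\hat\Omega$ by uniqueness of ODE solutions. Finally, the average of the local targets in (\ref{dc36}) equals the right-hand side of (\ref{dc30}), because $\frac1N\sum_i\bar x_{i0}=x_0$ and $\frac1N\sum_i\tilde C_i'\tilde R_{vi}\bar y_i=C'R_v^{-1}y$. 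Applying the mean-square version of the consensus lemma gives the last claim.

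\emph{Main obstacle.} I expect the main difficulty in the outer Kleinman convergence and in the interchange of limits. The scaling factors must make the consensus averages land exactly on the centralized terms (signs included), and this requires a careful bookkeeping check. In practice only approximate inner limits are available, so one should also argue continuity of the outer map in its inputs. For the exact-limit statement as written, the induction above suffices.
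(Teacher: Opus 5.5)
The paper gives no argument of its own; it defers to Lemma 2 of \cite{24}. Your two-layer skeleton is the right one:
\begin{itemize}
\item an averaging lemma for every inner recursion;
\item node-independence of the inner limits, so all $P_i^m$ (and $\Sigma_i^m$) coincide;
\item finite-horizon Kleinman monotonicity, with duality for $\Sigma$;
\item ODE uniqueness for $\hat\Omega_i$, and linearity of the targets in (\ref{dc36}).
\end{itemize}
Your reading of ``Hurwitz'' as Schur is forced, not optional. Since $(I_N-\frac{1}{\delta}\mathcal{L}-\frac{1}{N}\mathbf{1}_N\mathbf{1}_N')\mathbf{1}_N=0$, that matrix is never Hurwitz, whereas Schur stability amounts to connectivity plus $\lambda_{\max}(\mathcal{L})<2\delta$. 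However, the ``careful bookkeeping'' you postpone is exactly where your chain of equalities breaks for the lemma and algorithm as written. Three of your assertions are false, so what you actually prove is a corrected lemma.

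\textbf{The three false assertions.}

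\emph{First, the $Y$ limit.} Your own inner limit is $Y^m_\infty=Q+P^{m-1}BR^{-1}B'P^{m-1}$. Continuity therefore gives $Q+PBR^{-1}B'P$, not the stated $Q-PBR^{-1}B'P$. The plus sign is the one for which (\ref{dc18}) reproduces (\ref{dc4}). So the stated $Y$-claim cannot be proved; you should correct it rather than say it ``follows by continuity''.

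\emph{Second, the terminal condition.} Your nonnegativity and uniqueness arguments for $\Delta^m=P^m-P$ silently use $\Delta^m(T)=0$. But the algorithm imposes $P_i^m(T)=0$ while $P(T)=S$. Then $\Delta^m(T)=-S$, nonnegativity fails, and the monotone limit solves (\ref{dc4}) with zero terminal value. That limit equals $P$ only when $S=0$. You must take $P_i^m(T)=S$, and read the undefined $\tilde R_x$ as $R_x$.

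\emph{Third, the state estimate.} The average of the targets in (\ref{dc36}) is $\hat\Omega(t,0)x_0+\int_0^t\hat\Omega(t,\nu)L(\nu)y(\nu)d\nu$. This is not the right-hand side of (\ref{dc30}), which has $-\int$; that minus sign is itself inconsistent with (\ref{dc8}). Even with the sign fixed, the average equals $\hat x^*(t)$ only if $\bar x_{i0}$ is built from the mean $\hat x_{i0}$. With the random $x_{i0}$, the error is $\hat\Omega(t,0)(x_0-\hat x(0))$, whose mean square $\mathrm{tr}(\hat\Omega(t,0)R_x\hat\Omega(t,0)')$ is positive.

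\textbf{Two smaller points.}

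\emph{Monotonicity.} $\Delta^m\le\Delta^{m-1}$ does not follow from the $\Delta$-equation by comparison. With $A_{m-1}=A-BK^{m-1}$, you need the identity $-\frac{d}{dt}(P^{m-1}-P^m)=A_{m-1}'(P^{m-1}-P^m)+(P^{m-1}-P^m)A_{m-1}+(K^{m-1}-K^{m-2})'R(K^{m-1}-K^{m-2})$. It is valid only for $m\ge2$: at $m=1$, $\Delta^0=-P\le0\le\Delta^1$.

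\emph{No noise in (\ref{dc36}).} No noise enters (\ref{dc36}) per iteration; its target is a fixed random vector. Convergence is therefore pathwise, and the mean-square statement follows from $E\|\mathbf{M}\|^2<\infty$. The condition $\sum\alpha_r^2<\infty$ is used only to get $\alpha_r\to0$.
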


\emph{Proof.} The proof is similar to Lemma 2 in \cite{24} and is thus omitted here.\hfill $\blacksquare$

\begin{remark}
Based on the established convergence of the distributed iterative algorithms in Lemma 1, for any $\sigma>0$, there exist positive integer $\bar{M}$ such that for $m, \xi, \tau>\bar{M}-1$
\begin{align*}
&\|P_{i}^{m}(t)-P(t)\|^2<\sigma,\quad
\|\Sigma_{i}^{m}(t)-\Sigma(t)\|^2<\sigma,\\
&\|H_{i,\xi}(t)-H(t)\|^2<\sigma,\quad
E\|\bar{x}_{i,\tau}^{*}(t)-\hat{x}^{*}(t)\|^2<\sigma.
\end{align*}

Then, define the limiting values as follows:
\begin{align*}
&P_{i}^{*}(t)\triangleq P_{i}^{\bar{M}}(t),\quad
\Sigma_{i}^{*}(t)\triangleq \Sigma_{i}^{\bar{M}}(t),\\
&H_{i}^{*}(t)\triangleq H_{i,\bar{M}}(t),\quad
\bar{x}_{i}^{*}(t)\triangleq \bar{x}_{i,\bar{M}}^{*}(t),
\end{align*}
\end{remark}

With these convergence guarantees, each area can compute the distributed optimal power regulation command as follows.

\begin{theorem}\label{t2}
Under the same conditions as in Lemma \ref{le1}, the distributed optimal power regulation command for each area $i$ is
\begin{align}
u_i^{*}(t)=-[0~\cdots~0~R_i^{-1}B_i'~0~\cdots~0]P_{i}^{\ast}(t)\bar{x}_{i}^{\ast}(t),\label{dc40}
\end {align}
and $\bar{u}^*(t)=[u_1^*(t)~\cdots~u_N^*(t)]'$ approximates the centralized optimal power regulation command $u^*(t)$ in the mean-square sense, i.e. for any $\sigma>0$,
\begin{align}
E\|\bar{u}^{*}(t)-u^*(t)\|^2 < \sigma.\label{dc41}
\end{align}
\end{theorem}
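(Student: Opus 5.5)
The plan is to compare $\bar{u}^*(t)$ with $u^*(t)$ block by block and to split each block error into a gain-approximation term and an estimation-approximation term. Write $\Pi_i\triangleq[0~\cdots~0~R_i^{-1}B_i'~0~\cdots~0]$, so that (\ref{dc40}) reads $u_i^*(t)=-\Pi_iP_i^*(t)\bar{x}_i^*(t)$. Since $R=\text{diag}\{R_1,\ldots,R_N\}$ and $B=\sum_i B_i$, the $i$-th row block of $K(t)=R^{-1}B'P(t)$ is exactly $\Pi_iP(t)$. Here $B_i=E_i'B_{ii}e_i$, and I read the block $R_i^{-1}B_i'$ of $\Pi_i$ as the row $R_i^{-1}B_{ii}'$ placed in the $i$-th $4\times 4$ column position; I will check this dimension convention once. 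Hence Theorem \ref{the1} gives $u_i^*(t)$ (the centralized command) $=-\Pi_iP(t)\hat{x}^*(t)$. This reduces (\ref{dc41}) to bounding, for each $i$,
\begin{align*}
\Pi_iP_i^*(t)\bar{x}_i^*(t)-\Pi_iP(t)\hat{x}^*(t)
&=\Pi_i[P_i^*(t)-P(t)]\bar{x}_i^*(t)+\Pi_iP(t)[\bar{x}_i^*(t)-\hat{x}^*(t)],
\end{align*}
and then summing the $N$ squared norms.

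Using $\|a+b\|^2\le 2\|a\|^2+2\|b\|^2$, the second term is bounded by $2\|\Pi_i\|^2\sup_{[0,T]}\|P(t)\|^2\,E\|\bar{x}_i^*(t)-\hat{x}^*(t)\|^2$. $P(t)$ is continuous on the compact interval $[0,T]$, so its supremum is finite, and by Lemma \ref{le1} and the subsequent Remark the last factor is below any prescribed tolerance. The first term is bounded by $2\|\Pi_i\|^2\|P_i^*(t)-P(t)\|^2E\|\bar{x}_i^*(t)\|^2$, where $\|P_i^*-P\|^2$ is also small by Lemma \ref{le1}. For $E\|\bar{x}_i^*\|^2$ I will use $E\|\bar{x}_i^*\|^2\le 2E\|\hat{x}^*\|^2+2E\|\bar{x}_i^*-\hat{x}^*\|^2\le 2\sup_{[0,T]}E\|\hat{x}^*(t)\|^2+2$ once the tolerance is below $1$. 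The quantity $\sup_{[0,T]}E\|\hat{x}^*(t)\|^2$ is finite because $\hat{x}^*$ solves the linear SDE-type filter (\ref{dc8}) with continuous coefficients, Gaussian initial data and Gaussian inputs of finite second moments; equivalently, use the representation (\ref{dc30}) together with continuity of $\hat\Omega$ on $[0,T]$.

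With these constants in hand, set $c_1=\max_i\|\Pi_i\|^2$, $c_2=\sup_{[0,T]}\|P\|^2$ and $c_3=2\sup_{[0,T]}E\|\hat x^*\|^2+2$. Choose the tolerance in the Remark as $\sigma'=\sigma/(2Nc_1(c_2+c_3))$, which fixes $\bar M$. Summing over $i$ then yields $E\|\bar{u}^*(t)-u^*(t)\|^2<\sigma$.

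The main obstacle is making this uniform in $t$ and rigorous for the mixed term. Lemma \ref{le1} is stated pointwise in $t$, so $\bar M$ may depend on $t$. I would either accept a pointwise-in-$t$ statement, which is what (\ref{dc41}) literally asserts, or argue equicontinuity of the iterates on $[0,T]$ to get uniformity. In addition, $P_i^*$ is deterministic while $\bar{x}_i^*$ is random, which is what allows the factorization $E\|\Pi_i(P_i^*-P)\bar{x}_i^*\|^2\le\|\Pi_i\|^2\|P_i^*-P\|^2E\|\bar{x}_i^*\|^2$. I would also note that $\bar{x}_i^*$ is built from $\Sigma_{i,\infty},H_{i,\infty}$ rather than from $\Sigma,H$ exactly; this is absorbed into the convergence claim of Lemma \ref{le1} for $\bar{x}_{i,\tau}^*$.
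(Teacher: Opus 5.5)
Your proposal is correct and takes essentially the same approach as the paper. Both identify the $i$-th row block of $R^{-1}B'$, split $P_i^*\bar x_i^*-P\hat x^*$ into a gain-error term and an estimation-error term, bound each with constants on $[0,T]$ and the tolerances from the Remark, and sum over $i$. The only difference is the pairing: you pair the gain error with $\bar x_i^*$ and the estimation error with $P$, so you need a bound on $E\|\bar x_i^*\|^2$, whereas the paper pairs the gain error with $\hat x^*$ and the estimation error with $P_i^*$, so it needs a bound on $\|P_i^*\|^2$. Your explicit choice of $\sigma'$ is slightly more careful than the paper's appeal to arbitrary $\sigma$, provided you also take $\sigma'\le 1$.
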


\emph{Proof.} First, by noting that \begin{align}
\begin{bmatrix}
R_1^{-1}B_1' & 0 & \cdots & 0 \\
0 & R_2^{-1}B_2' & \cdots & 0 \\
\vdots & \vdots & \ddots & \vdots \\
0 & 0 & \cdots & R_N^{-1}B_N'
\end{bmatrix}
=R^{-1}B',\nonumber
\end{align}
we obtain
\begin{align}
&\bar{u}^*(t)-u^*(t)\nonumber\\
=&\begin{bmatrix}
(R_1^{-1}B_1'~\cdots~0)(P(t)\hat{x}^*(t)-P_{1}^{\ast}(t)\bar{x}_{1}^{\ast}(t))  \\
\vdots  \\
(0~\cdots~R_N^{-1}B_N')(P(t)\hat{x}^*(t)-P_{N}^{\ast}(t)\bar{x}_{N}^{\ast}(t))
\end{bmatrix}. \label{dc41e}
\end{align}

Next, by applying the Cauchy-Schwarz inequality, it yields
\begin{align}
&\mathbb{E}\|\bar{u}^*(t)-u^*(t)\|^2 \nonumber\\
~~\le& \sum_{i=1}^N \mathbb{E}\left\|\big(0~\cdots~R_i^{-1}B_i'~\cdots~0\big)\big(P(t)\hat{x}^*(t)-P_i^*(t)\bar{x}_i^*(t)\big)\right\|^2 \nonumber\\
\le& \sum_{i=1}^N C_i \mathbb{E}\left\|P(t)\hat{x}^*(t)-P_i^*(t)\bar{x}_i^*(t)\right\|^2.  \label{dc41f}
\end{align}
where $C_i = \|(0~\cdots~R_i^{-1}B_i'~\cdots~0)\|^2 = \|R_i^{-1}B_i'\|^2$ is a positive constant.

From the (\ref{pr6})-(\ref{pr7}), (\ref{dc8}) and (\ref{dc4}), there exists positive constants $C_x$, $C_P$ such that $\mathbb{E}\|\hat{x}^*(t)\|^2 \le C_x$, $\|P_i^*(t)\|^2\le C_P$ for all $t \in [0,T]$. Then, combining with Lemma \ref{le1} and Remark 1, we have
\begin{align}
&\mathbb{E}\left\|P(t)\hat{x}^*(t)-P_i^*(t)\bar{x}_i^*(t)\right\|^2 \nonumber\\
&\quad \le 2\mathbb{E}\left[\|P_i^*(t)\|^2\left\|\bar{x}_i^*(t)-\hat{x}^*(t)\right\|^2\right] \nonumber\\
&\qquad + 2\mathbb{E}\left[\left\|P_i^*(t)-P(t)\right\|^2\left\|\hat{x}^*(t)\right\|^2\right] \nonumber\\
&\quad \le 2C_P \mathbb{E}\left\|\bar{x}_i^*(t)-\hat{x}^*(t)\right\|^2 + 2C_x \mathbb{E}\left\|P_i^*(t)-P(t)\right\|^2 \nonumber\\
&\quad \le 2(C_P + C_x)\sigma. \label{dc41i}
\end{align}

Let $C_0 = 2(C_P + C_x)$ and $C_{\max} = \max\{C_1, \ldots, C_N\}$. Substituting inequality (\ref{dc41i}) into (\ref{dc41f}), we get 
\begin{align}
\mathbb{E}\|\bar{u}^*(t)-u^*(t)\|^2 \le \sum_{i=1}^N C_i \cdot C_0\sigma \le N C_{\max} C_0 \sigma. \label{dc41j}
\end{align}

Together with the fact that $\sigma > 0$ can be chosen arbitrarily small, there exist positive integers $\bar{M}$ such that for all $m, \xi, \tau>\bar{M}-1$, the inequality $\mathbb{E}\|\bar{u}^*(t)-u^*(t)\|^2 < \sigma$ holds. This completes the proof. \hfill $\blacksquare$

\section{Simulation Example}
In this section, we consider a multi-area smart grid consisting of six load frequency control areas. In particular, the six interconnected control areas have heterogeneous dynamic model parameters as listed in Table 1, including the generator rotational inertia constant $J_i/(kg\cdot m^{2})$, damping coefficients $G_i/(MW/Hz)$, governor coefficient $W_i$, and turbine/governor time constants $T_{t,i}/(s)$, $T_{g,i}/(s)$. In particular, the initial frequency deviation for each area is set to 0.1 $Hz$, and other states are initialized to zero, i.e., $\hat{x}_{i0}=[0.1~~0~~0~~0]'$.
\begin{table}[h]
\centering
\caption{System parameters.}
\label{table1}
\begin{tabular}{lcccccc}
\toprule
Areas & 1 & 2 & 3 & 4 & 5 & 6 \\
\toprule
$J_i$ & 10 & 11 & 10 & 12 & 10 & 11 \\
$G_i$ & 1.2 & 1.1 & 1.0 & 1.1 & 1.2 & 1.0 \\
$T_{t,i}$ & 0.31 & 0.30 & 0.32 & 0.30 & 0.30 & 0.34 \\
$T_{g,i}$ & 0.08 & 0.085 & 0.075 & 0.08 & 0.082 & 0.083 \\
$W_i$ & 2.4 & 2.45 & 2.5 & 2.3 & 2.35 & 2.4 \\
\bottomrule
\end{tabular}
\end{table}
Furthermore, considering the uncertainties in actual power systems, the initial state covariance for each area is selected as $R_{xi}=0.01I, i=1,\cdots,6$ to reflect the measurement uncertainty of the initial state values, the process noise covariance is selected as $R_{wi} = 0.01I$, which characterizes the effects arising from external load disturbance and modeling uncertainty, and the measurement noise covariance is selected as $R_{vi} = 0.01I$, which characterizes the effects arising from frequency and tie-line power signals. The communication topology and the physical interconnection lines between areas are shown in Fig.~\ref{fig3}, where interconnection line coefficients $T_{ij}$ between area $i$ and area $j$ are both 2 $p.u./Hz$.
\begin{figure}[!t]
\centering
  \includegraphics[width=6cm,height=3.5cm]{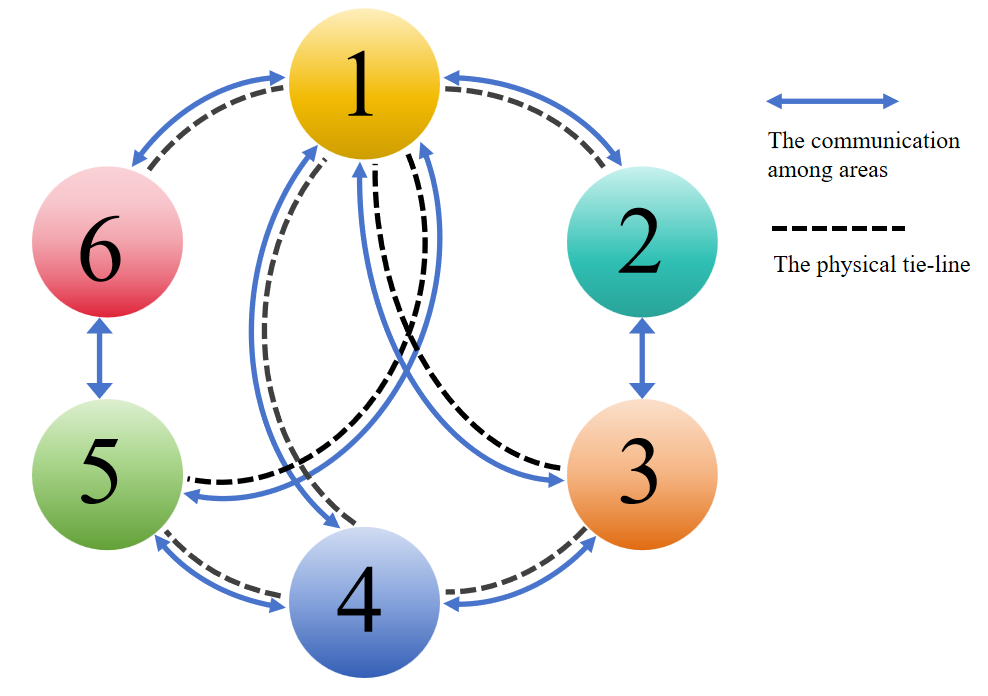}\\
  \caption{Communication topology and the physical interconnection lines between areas.}\label{fig3}
\end{figure}

To minimize control power consumption and frequency deviation over the finite time horizon $[0,T]$, the weighting matrices in (\ref{dc3}) are selected as follows:
\begin{eqnarray*}
&Q = I, R=I, T=6.
\end{eqnarray*}
In particular, $Q_i$ of each area can be expressed as $Q_1 = 1.4I,~ Q_2 = 1.2I, ~Q_3 = 0.8I,~Q_4 = 0.6I,~Q_5 = 1.1I, ~Q_6 = 0.9I$.

Based on Fig. 3 and the distributed iterative algorithms proposed in Section III, each control area $i$ computes its local optimal power regulation command by executing the following steps:
\begin{itemize}
\item \textbf{Step 1:} Computes the control Riccati matrix $P_i^m(t)$ by executing the distributed iterative algorithm (\ref{dc20})-(\ref{dc18}).
\item \textbf{Step 2:} Computes the estimation Riccati matrix $\Sigma_i^m(t)$ by executing the distributed iterative algorithm (\ref{dc22})-(\ref{dc19}).
\item \textbf{Step 3:} Computes the distributed state estimate $\bar{x}_{i,\tau}^*(t)$ by executing the iterative algorithm (\ref{dc33}) and (\ref{dc36}).
\end{itemize}

By running the implementation steps above with $\alpha_r = \alpha_\xi = \alpha_\tau = 1/k$, $\delta=10$, $\sigma=10^{-3}$, $m=30$, $r=\xi=\tau=800$, we obtain $P_i^*(t)$, $\Sigma_i^*(t)$, $\bar{x}_i^*(t)$ and $u_i^*(t)$, whose corresponding trajectories are shown in Figs.~\ref{fig4}-\ref{fig7}, respectively. It is evident that the distributed solutions from all areas converge to the centralized solutions over the finite horizon $[0,T]$.
\begin{figure}[h]
\centering
  \includegraphics[width=7.5cm,height=5cm]{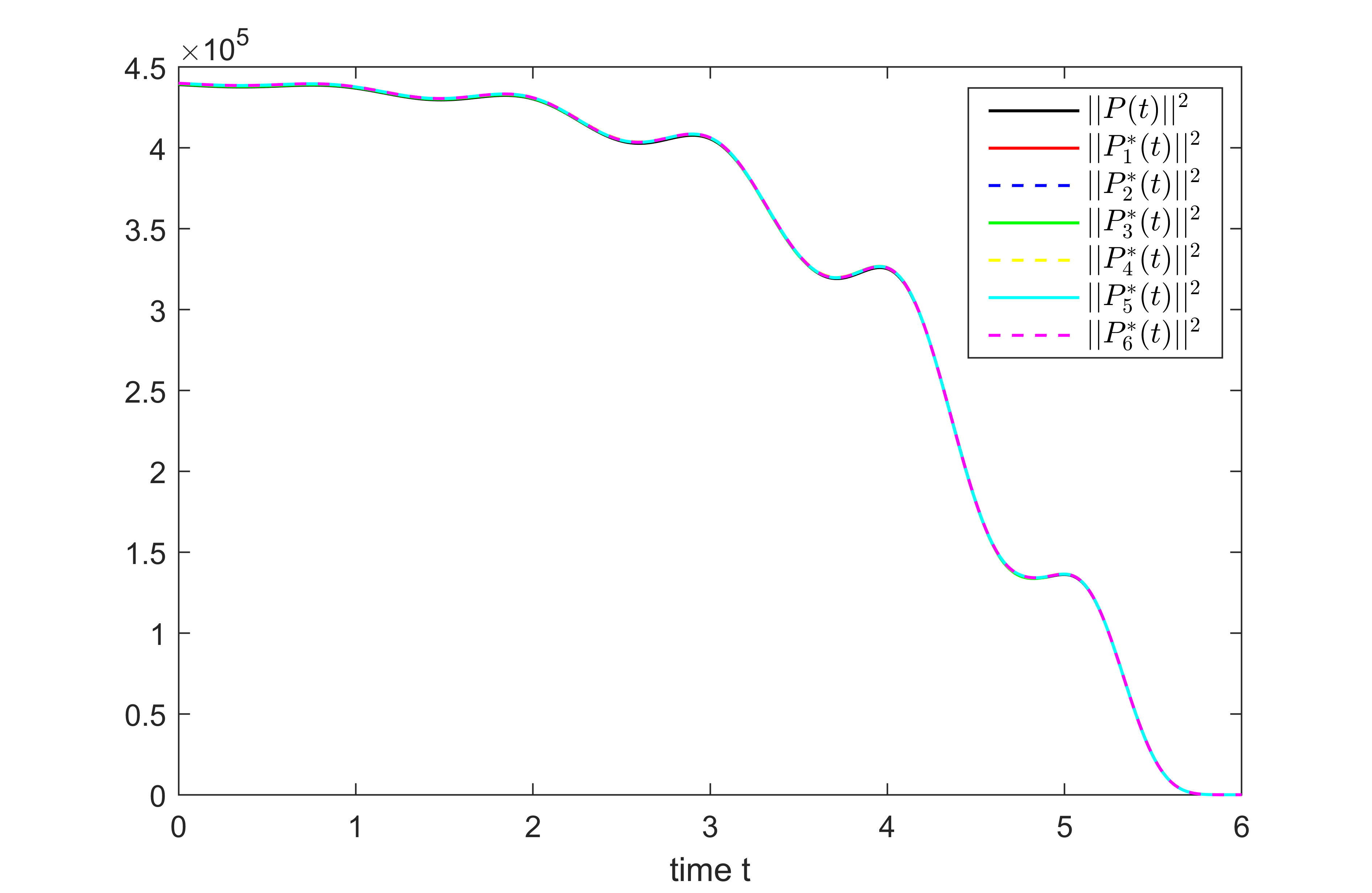}\\
  \caption{The trajectories of $\|P_{i}^{*}(t)\|^2$.}\label{fig4}
\end{figure}
%
\begin{figure}[!t]
\centering
  \includegraphics[width=7.5cm,height=5cm]{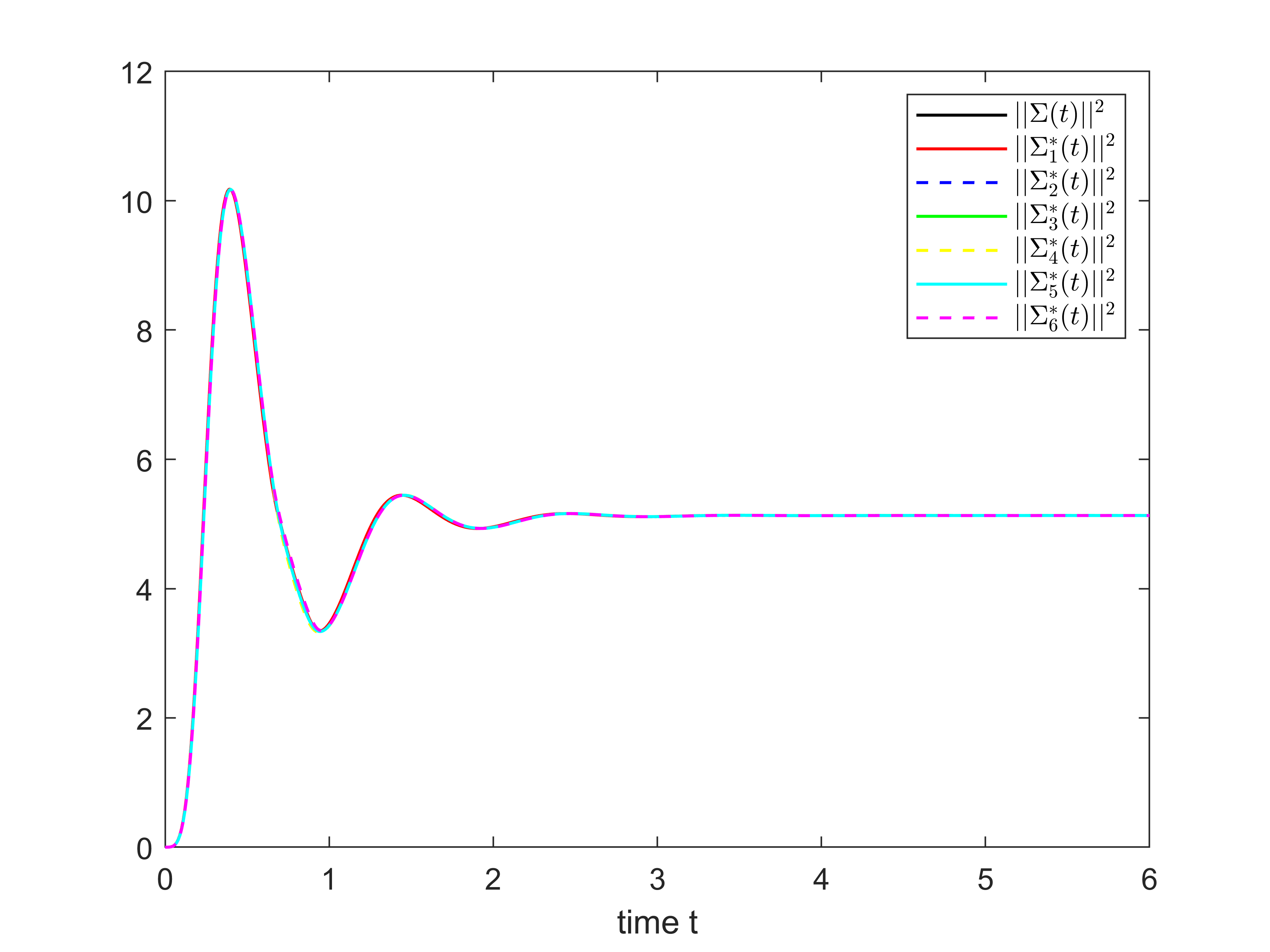}\\
  \caption{The trajectories of $\|\Sigma_{i}^{*}(t)\|^2$.}\label{fig5}
\end{figure}

\begin{figure}[!t]
\centering
  \includegraphics[width=7.5cm,height=5cm]{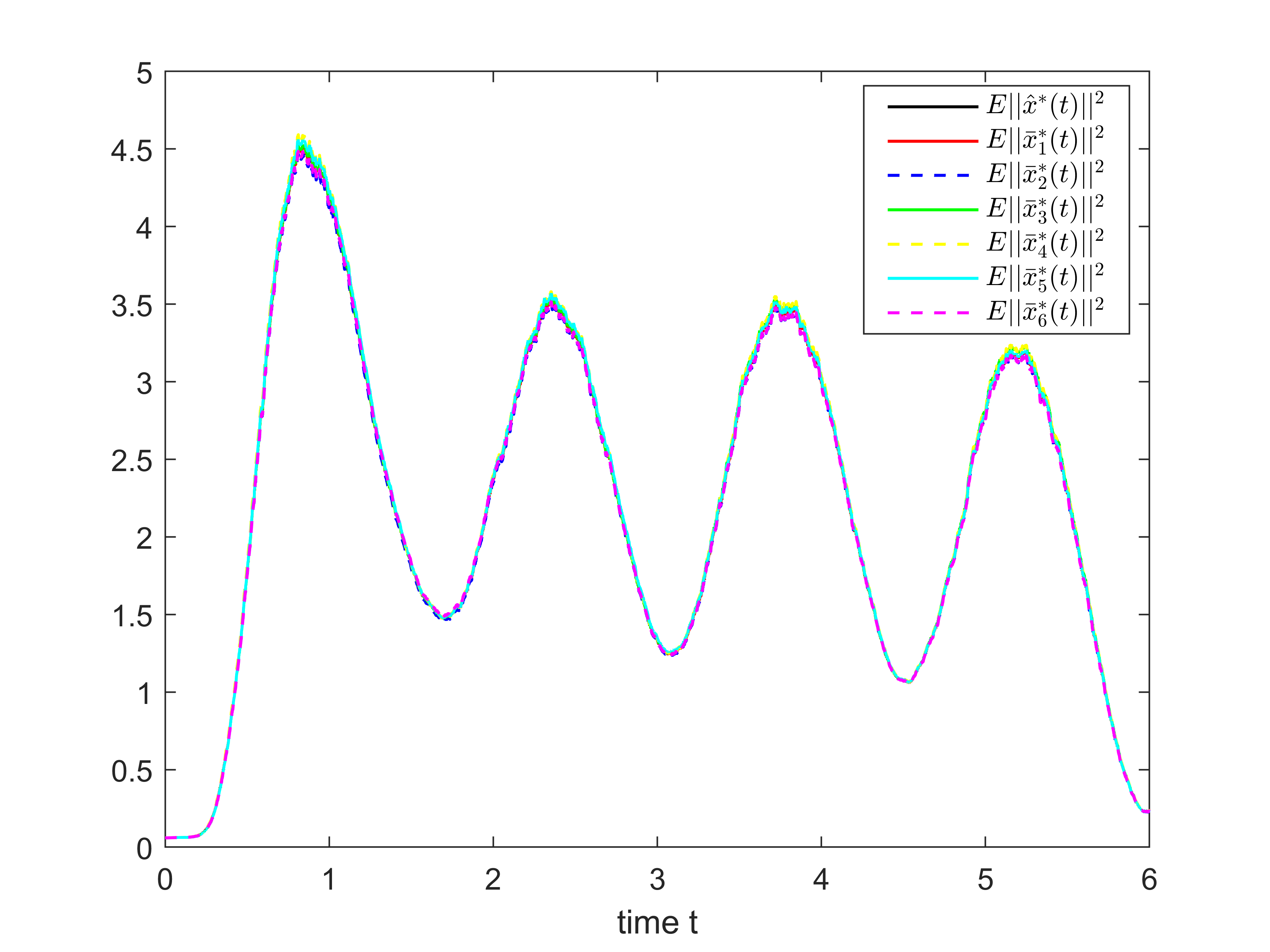}\\
  \caption{The trajectories of $E\|\bar{x}_{i}^{*}(t)\|^2$.}\label{fig6}
\end{figure}
\begin{figure}[!t]
\centering
  \includegraphics[width=7.5cm,height=5cm]{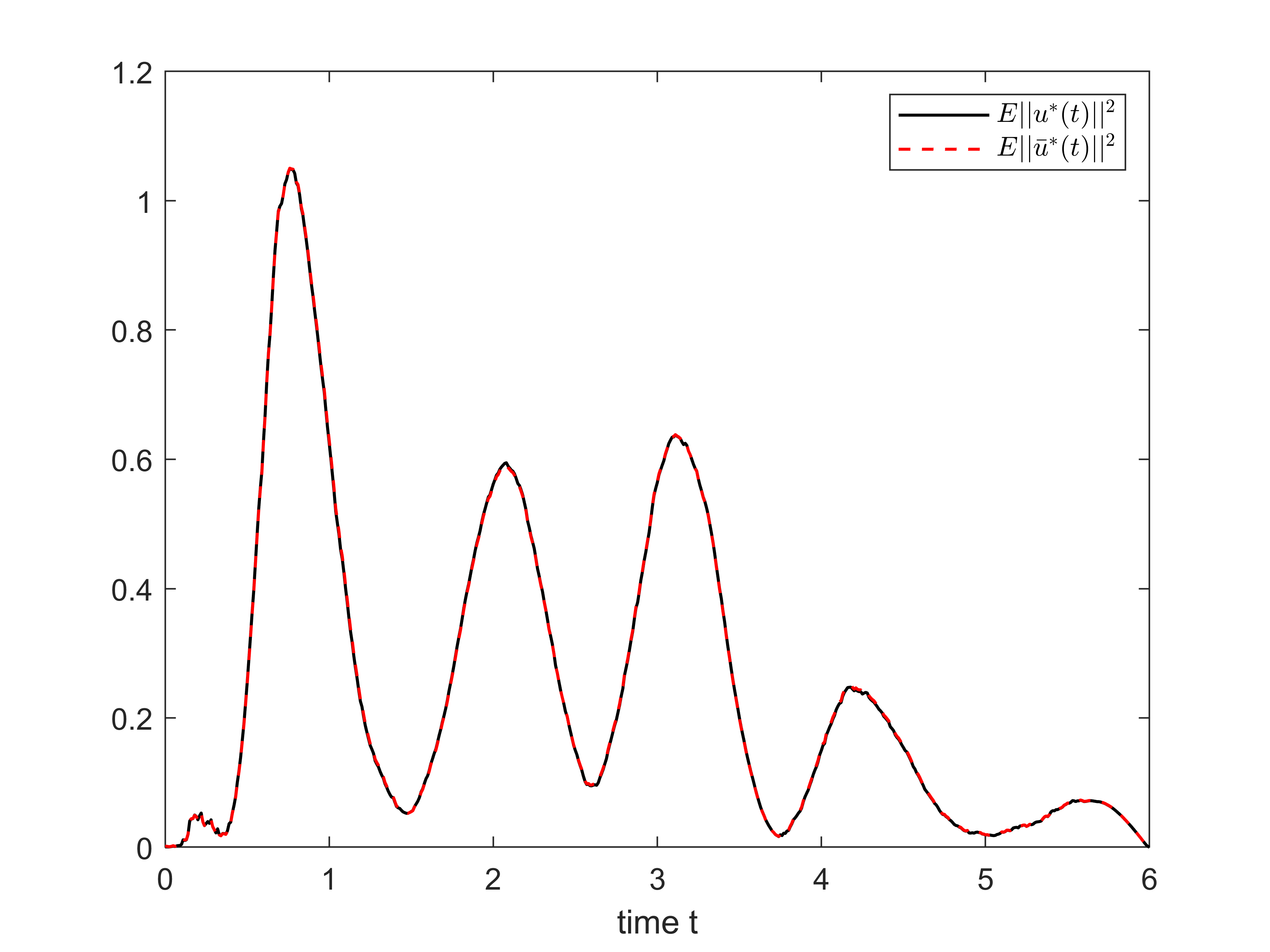}\\
  \caption{The trajectories of $E\|\bar{u}^{*}(t)\|^2$.}\label{fig7}
\end{figure}

Moreover, utilizing the obtained distributed control commands $u_i^*(t)$, the trajectories of $E\|\Delta f_i(t)\|^2$ and $E\|ACE_i(t)\|^2$ are shown in Figs.~\ref{fig8} and \ref{fig9}, respectively. It can be observed that both frequency deviation and area control error are reduced over the finite horizon $[0,T]$.
\begin{figure}[!t]
\centering
  \includegraphics[width=7.5cm,height=5cm]{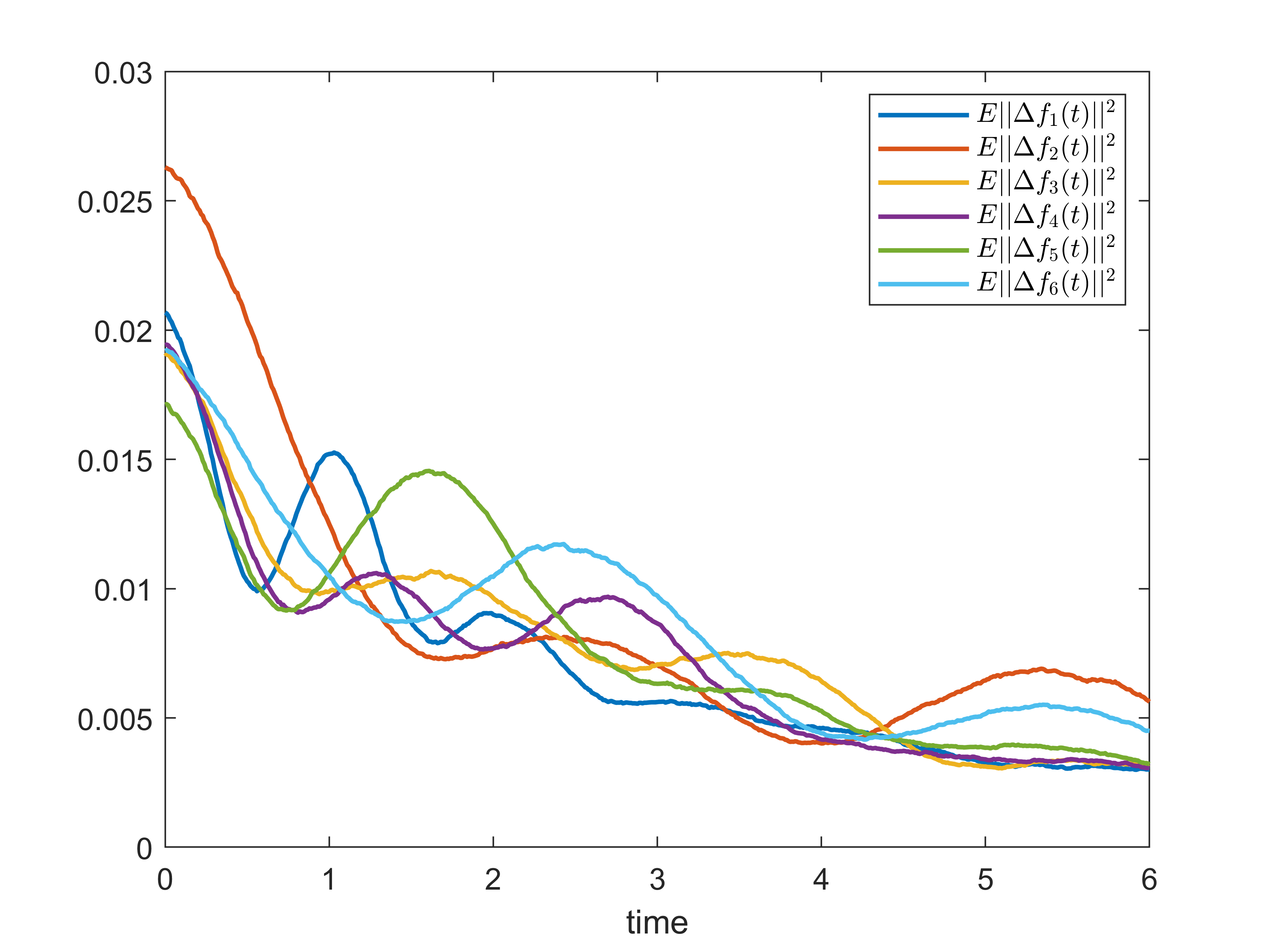}\\
  \caption{The trajectories of $E\|\Delta f_i(t)\|^2$.}\label{fig8}
\end{figure}
\begin{figure}[!t]
\centering
  \includegraphics[width=7.5cm,height=5cm]{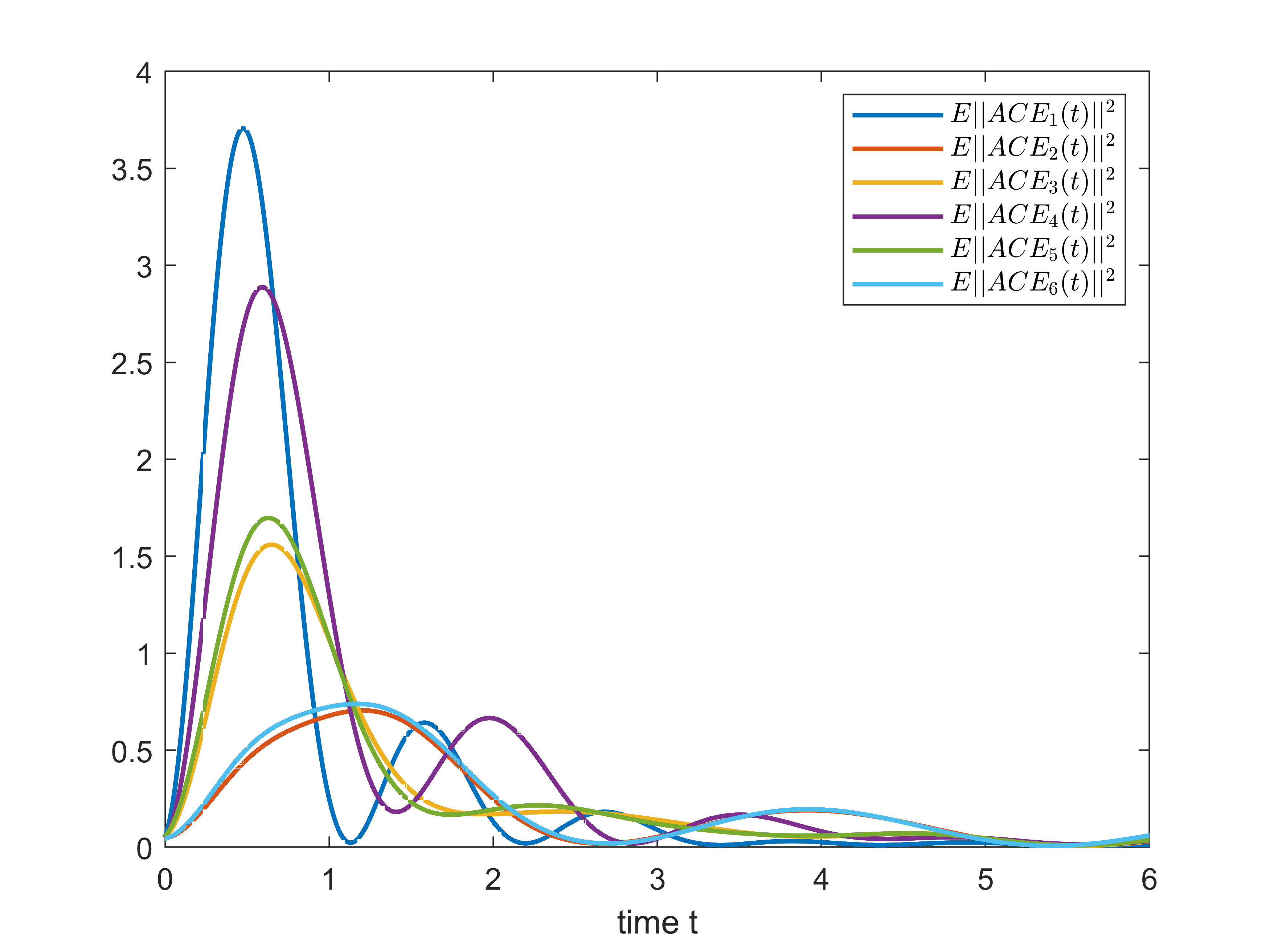}\\
  \caption{The trajectories of $E\|ACE_i(t)\|^2$.}\label{fig9}
\end{figure}

As a comparison, we also conduct simulations between the proposed distributed algorithm, the centralized optimal controller, and the distributed LFC method in [9] under identical initial conditions and system parameters. The quantitative results are summarized in Table~\ref{table2}. It can be observed that the performance index $J$ achieved by the proposed method approximates that of the centralized optimal controller and is smaller than that of the controller in [9] under different noise scenarios.
\begin{table}[h]
\centering
\caption{Performance index comparison among different controllers.}
\label{table2}
\begin{tabular}{l|l|l|l|l}
\hline
Case & $(R_{wi}, R_{vi})$ & $J$ (Proposed)  & $J$ (Centralized)& $J$ ([9]) \\
\hline
Case 1 & (0.4, 0.5) & 24.9134 & 24.9388& 42.1107  \\
Case 2 & (0.1, 0.1) & 19.5172  & 19.4962  & 39.4491\\
Case 3 & (0.02, 0.02) & 18.8083  & 18.7932 & 34.1017 \\
Case 4 & (0.6, 0.8) & 26.8736  & 26.8861 & 44.8122 \\
Case 5 & (0.002, 0.003) & 17.0317  & 17.0821  & 32.7498\\
\hline
\end{tabular}
\end{table}

\section{Conclusions}

This paper investigates distributed load frequency control in a multi-area smart grid under information privacy constraints, where internal structural parameters and operational states of each area are not shared with non-neighboring areas. A distributed iterative algorithm is proposed to approximate the global optimal power regulation command via distributed computation of the control Riccati equation, the estimation Riccati equation, and the state estimation. Numerical simulation results demonstrate that the performance of the proposed method can approximate centralized optimal control and is better than traditional distributed strategies.


\begin{thebibliography}{0}

\bibitem{1}
V. P. Singh, N. Kishor, and P. Samuel,
{``Load frequency control with communication topology changes in smart grid,''}
{\em IEEE Trans. Ind. Informat.}, vol. 12, no. 5, pp. 1943-1952, 2016.

\bibitem{2}
M. Kouki, B. Marinescu, and F. Xavier,
{``Exhaustive modal analysis of large-scale interconnected power systems with high power electronics penetration,''}
{\em IEEE Trans. Power Syst.}, vol. 35, no. 4, pp. 2759-2768, 2020.

\bibitem{3}
D. K. Chaturvedi,
{``Load frequency control in power systems,''}
Springer, 2011.

\bibitem{4}
P. S. V. Sagar and K. S. Swarup,
{``Load frequency control in isolated micro-grids using centralized model predictive control,''}
in {\em Proc. IEEE Int. Conf. Power Electron., Drives Energy Syst.}, Trivandrum, India, 2016, pp. 1-6.

\bibitem{5}
S. Rajesh and M. Swarup,
{``Decentralized model predictive control for load-frequency control of interconnected power system,''}
{\em IEEE Trans. Power Syst.}, vol. 30, no. 1, pp. 1-9, 2014.

\bibitem{6}
C. J. Ramlal, A. Singh, S. Rocke, and M. Sutherland,
{``Decentralized fuzzy $H_{\infty}$-iterative learning LFC with time-varying communication delays and parametric uncertainties,''}
{\em IEEE Trans. Power Syst.}, vol. 34, no. 6, pp. 4718-4727, 2019.

\bibitem{7}
N. M. Dehkordi, N. Sadati, and M. Hamzeh,
{``Distributed robust finite-time secondary voltage and frequency control of islanded microgrids,''}
{\em IEEE Trans. Power Syst.}, vol. 32, no. 5, pp. 3648-3659, 2017.

\bibitem{8}
Z. Hu, K. Zhang, R. Su, and R. Wang,
{``Robust cooperative load frequency control for enhancing wind energy integration in multi-area power systems,''}
{\em IEEE Trans. Autom. Sci. Eng.}, vol. 22, pp. 1508-1518, 2025.

\bibitem{9}
V. P. Singh, N. Kishor, and P. Samuel,
{``Distributed multi-agent system-based load frequency control for multi-area power system in smart grid,''}
{\em IEEE Trans. Ind. Informat.}, vol. 64, no. 6, pp. 5151-5160, Jun. 2017.

\bibitem{10}
S. Liu and P. X. Liu,
{``Distributed model-based control and scheduling for load frequency regulation of smart grids over limited bandwidth networks,''}
{\em IEEE Trans. Ind. Informat.}, vol. 14, no. 5, pp. 1814-1823, 2018.

\bibitem{11}
D. Chowdhury and H. K. Khalil,
{``Dynamic consensus and extended high gain observers as a tool to achieve practical frequency synchronization in power systems under unknown time-varying power demand,''}
{\em Automatica}, vol. 131, p. 109753, 2021.

\bibitem{12}
E. Vlahakis, L. D. Dritsas, and G. D. Halikias,
{``Distributed LQR design for identical dynamically coupled systems: Application to load frequency control of multi-area power grid,''}
in {\em Proc. IEEE Conf. Decis. Control}, Nice, France, 2019, pp. 4471-4476.

\bibitem{13}
L. Yang, T. Liu, Z. Tang, and D. J. Hill,
{``Distributed optimal generation and load-side control for frequency regulation in power systems,''}
{\em IEEE Trans. Autom. Control}, vol. 66, no. 6, pp. 2724-2731, 2021.

\bibitem{14}
H. Kim, M. Zhu, and J. Lian,
{``Distributed robust adaptive frequency control of power systems with dynamic loads,''}
{\em IEEE Trans. Autom. Control}, vol. 65, no. 11, pp. 4887-4894, 2020.

\bibitem{15}
E. Vlahakis, L. D. Dritsas, and G. D. Halikias,
{``Distributed LQR design for a class of large-scale multi-area power systems: Application to load frequency control of multi-area power grid,''}
{\em Energies}, vol. 12, no. 14, p. 2664, 2019.


\bibitem{16}
P. Ge, X. Dou, X. Quan, Q. Hu, Z. Wu, and W. Gu,
{``Extended-state-observer-based distributed robust secondary voltage and frequency control for an autonomous microgrid,''}
{\em IEEE Trans. Sustain. Energy}, vol. 11, no. 1, pp. 195-205, 2020.

\bibitem{17}
M. Zhang, S. Dong, P. Shi, G. Chen, and X. Guan,
{``Distributed observer-based event-triggered load frequency control of multiarea power systems under cyber attacks,''}
{\em IEEE Trans. Autom. Sci. Eng.}, vol. 20, no. 4, pp. 2435-2444, 2023.

\bibitem{18}
H. Haes Alhelou, M. E. Hamedani Golshan, and N. D. Hatziargyriou,
{``Deterministic dynamic state estimation-based optimal LFC for interconnected power systems using unknown input observer,''}
{\em Inf. Control}, vol. 38, no. 1, pp. 21-50, 1978.

\bibitem{19}
S. Xu, D. Ye, G. Li, and D. Yang,
{``Globally stealthy attacks against distributed state estimation in smart grid,''}
{\em IEEE Trans. Autom. Sci. Eng.}, vol. 22, pp. 1353-1363, 2025.

\bibitem{20}
Z. Wang, J. Dai, H. Zhang, and J. Zhang,
{``Observer-based finite-time fuzzy load frequency control for multiarea nonlinear power systems under input delays and cyber attacks,''}
{\em IEEE Trans. Smart Grid}, vol. 16, no. 4, pp. 3336-3345, 2025.

\bibitem{21}
J. Xia, X. Guo, J. H. Park, G. Chen, and X. Xie,
{``Predictor-based load frequency control for large-scale networked control power systems,''}
{\em IEEE Trans. Power Syst.}, vol. 39, no. 5, pp. 6263--6276, 2024.

\bibitem{22}
K. Liao and Y. Xu,
{``A robust load frequency control scheme for power systems based on second-order sliding mode and extended disturbance observer,''}
{\em IEEE Trans. Ind. Informat.}, vol. 14, no. 7, pp. 3076-3086, 2018.

\bibitem{23}
K. Zhou, J. C. Doyle, and K. Glover,
{``Robust and optimal control,''}
Prentice Hall, 1996.

\bibitem{24}
W. Yang, Z. Zhang, and J. Xu,
{``Distributed solving of linear quadratic optimal controller with terminal state constraint,''}
arXiv:2504.05631, 2025.

\end{thebibliography}
 \end{document}